\providecommand{\tabularnewline}{\\}
\providecommand{\algorithmname}{Algorithm}
  \theoremstyle{definition}
  \newtheorem{problem}{\protect\problemname}
  \theoremstyle{definition}
  \newtheorem{defn}{\protect\definitionname}
\theoremstyle{plain}
\newtheorem{thm}{\protect\theoremname}
  \theoremstyle{remark}
  \newtheorem{rem}{\protect\remarkname}
 \theoremstyle{definition}
  \newtheorem{example}{\protect\examplename}
  \theoremstyle{plain}
  \newtheorem{lem}{\protect\lemmaname}
  \theoremstyle{plain}
  \newtheorem{cor}{\protect\corollaryname}
\renewcommand{\vec}[1]{\boldsymbol{#1}}
\newcommand{\acc}{\ensuremath{\varepsilon}}
\newcommand{\er}{\epsilon}
\newcommand{\np}{\ensuremath{s}}
\newcommand{\dg}{\ensuremath{d}}
\newcommand{\nm}{\ensuremath{N}}
\newcommand{\nd}{\ensuremath{z}}
\newcommand{\jc}{\ensuremath{a}}
\newcommand{\jp}{\nd}
\newcommand{\nparams}{R}
\newcommand{\mm}{\tilde{m}}
\newcommand{\df}{p}
\DeclareMathOperator{\argmin}{argmin}
\DeclareMathOperator{\shift}{E}
\DeclareMathOperator{\id}{I}
\newcommand{\dl}{\Delta}
\DeclareMathOperator{\diag}{diag}
  \providecommand{\definitionname}{Definition}
  \providecommand{\examplename}{Example}
  \providecommand{\lemmaname}{Lemma}
  \providecommand{\problemname}{Problem}
  \providecommand{\remarkname}{Remark}
\providecommand{\corollaryname}{Corollary}
\providecommand{\theoremname}{Theorem}
\begin{document}

\begin{frontmatter}{}

\title{Accurate solution of near-colliding Prony systems via decimation and homotopy continuation}

\author{Dmitry Batenkov\fnref{curraddr,support}}

\address{Department of Computer Science, Technion - Israel Institute of Technology, Haifa 32000, Israel}

\fntext[curraddr]{Current address: Department of Mathematics, Massachusetts Institute of Technology,
Cambridge, MA 02139, USA. email: batenkov@mit.edu}

\fntext[support]{The research leading to these results has received funding from the
European Research Council under European Union\textquoteright s Seventh
Framework Program, ERC Grant agreement no. 320649.}

\ead{batenkov@cs.technion.ac.il}
\begin{abstract}
We consider polynomial systems of Prony type, appearing in many areas
of mathematics. Their robust numerical solution is considered to be
difficult, especially in ``near-colliding'' situations. We consider
a case when the structure of the system is a-priori fixed. We transform
the nonlinear part of the Prony system into a Hankel-type polynomial
system. Combining this representation with a recently discovered ``decimation''
technique, we present an algorithm which applies homotopy continuation
to an appropriately chosen Hankel-type system as above. In this way,
we are able to solve for the nonlinear variables of the original system
with high accuracy when the data is perturbed.
\end{abstract}

\end{frontmatter}{}

\global\long\def\CC{\mathbb{C}}

\global\long\def\RR{\mathbb{R}}

\global\long\def\NN{\mathbb{N}}

\global\long\def\TT{\mathbb{T}}

\newcommandx\meas[1][usedefault, addprefix=\global, 1=k]{m_{#1}}

\newcommandx\fwm[1][usedefault, addprefix=\global, 1=\nm]{{\cal P_{#1}}}
\newcommandx\jac[1][usedefault, addprefix=\global, 1=\nm]{{\cal J_{#1}}}
\newcommandx\pinv[1][usedefault, addprefix=\global, 1=\nm]{\jac[#1]^{\dagger}}

\global\long\def\isdef{\ensuremath{:=}}

\global\long\def\decmap{{\cal P}^{\left(\df\right)}}
\global\long\def\decjac{{\cal J}^{\left(\df\right)}}
\newcommandx\scmap[1][usedefault, addprefix=\global, 1=\df]{{\cal R}_{#1}}

\global\long\def\ddd{\ensuremath{\mathfrak{D}}}

\section{Introduction}

\subsection{The Prony problem}

Consider the following approximate algebraic problem.
\begin{problem}
\label{prob:prony}Given $\left(\mm_{0},\dots,\mm_{\nm-1}\right)\in\CC^{\nm}$,
$\acc\geqslant0$, $\np\in\NN$, a multiplicity vector $D=\left(d_{1},\dots,d_{\np}\right)\in\NN^{\np}$
with $\dg:=\sum_{j=1}^{\np}d_{j}$ and $2d\leqslant\nm$, find complex
numbers $\{\nd_{j},\{\jc_{\ell,j}\}_{\ell=0}^{d_{j}-1}\}_{j=1}^{\np}$
satisfying $a_{d_{j}-1,j}\neq0$ and $\left|\nd_{j}\right|=1$ with
$\left\{ \nd_{j}\right\} $ pairwise distinct, such that for some
perturbation vector $\left(\er_{0},\dots,\er_{\nm-1}\right)\in\CC^{\nm}$
with $|\er_{k}|<\acc$ we have
\begin{equation}
\mm_{k}=\underbrace{\sum_{j=1}^{\np}\nd_{j}^{k}\sum_{\ell=0}^{d_{j}-1}\jc_{\ell,j}k^{\ell}}_{:=m_{k}}+\er_{k},\qquad k=0,\dots,\nm-1.\label{eq:prony}
\end{equation}
\end{problem}
This so-called \emph{confluent Prony problem} and its numerous variations
appear in signal processing, frequency estimation, exponential fitting,
Pad\'{e} approximation, sparse polynomial interpolation, spectral
edge detection, inverse moment problems and recently in theory of
super-resolution (see \cite{badeau2006high,batFullFourier,byPronySing12,ben-or_deterministic_1988,candes2012towards,comer_sparse_2012,eckhoff1995arf,giesbrecht_symbolicnumeric_2009,pereyra_exponential_2010,potts2010parameter}
and references therein). We comment on the specific assumptions made
in the above formulation in \prettyref{subsec:Assumptions} below.

Several solution methods for Prony systems have been proposed in the
literature, starting with the classical Prony's method and including
algorithms such as MUSIC/ESPRIT, Approximate Prony Method, matrix
pencils, Total Least Squares, Variable Projections (VARPRO) or $\ell_{1}$
minimization (\cite{batenkov2011accuracy,beckermann_numerical_2007,cadzow_total_1994,candes2012towards,oleary_variable_2013,lee_quotient-difference_2007,potts2010parameter}
and references therein). While the majority of these algorithms perform
well on simple and well-separated nodes, they are somewhat poorly
adapted to handle either multiple/clustered nodes (the root extraction/eigenvalue
computation becoming ill-conditioned), large values of $\nm$ (the
quadratic cost function is highly non-convex w.r.t to the unknowns
$\nd_{j},\jc_{\ell,j}$) or non-Gaussian noise. Despite this, our
recent studies \cite{batFullFourier,batenkov_numerical_2014,batenkov2011accuracy}
suggest that these problems are only partially due to the inherent
sensitivity of the problem (i.e. \emph{problem conditioning}). Generally
speaking, introduction of confluent (high-order) nodes into the model
leads, in some cases, to improved estimation of the parameters \textendash{}
as indicated by the reduced condition number of the problem. In particular,
we argue that while for $\nm\delta\gg1$, where $\delta$ is the node
separation (see \prettyref{def:separation} below), and $D=\left(1,1,\dots,1\right)$
the existing methods might be close to optimal, there is a gap between
theory and practice in the \textquotedblleft near-collision\textquotedblright{}
situation $\nm\delta\ll1$ and high multiplicity, even if the noise
is sufficiently small.

\emph{Decimation }is a particular regularization for near-colliding Prony systems,
which was first proposed in \cite{batFullFourier} and further analyzed
in \cite{batenkov_numerical_2014}. The essential idea is 
that if $\mbox{\nm\ensuremath{\delta\ll}1}$, then after taking the
decimated sequences $\{m_{0},m_{\df},m_{2\df},\dots,m_{(\nparams-1)\df}\}$,
where $\df\in\NN$ is not too large, as measurements, and solving
the resulting square system, we can get accuracy improvement of the order
of $\df^{-d_{j}}$ for the node $\nd_{j}$ - compared to the error
in the case $\df=1$. Numerical studies carried out in \cite{batenkov_numerical_2014} (see \prettyref{subsec:the-preprint} below for further
details)
indicate that in this case, the best possible resulting accuracy is very close to
the accuracy obtained by least squares - as quantified by the ``near-collision condition
number''. The work \cite{batenkov_numerical_2014} does not suggest any practical solution method which achieves the above accuracy, and in the present paper we propose to fill this gap.

\subsection{Our contribution}

In this paper we focus on developing an accurate solution method for
\prettyref{prob:prony} in the near-collision regime $\nm\delta\ll1$,
in the case of a single cluster.

We propose a novel symbolic-numeric technique, ``decimated homotopy'',
for this task. The approach is an extension of the method used in
\cite{batFullFourier} for the case $\np=1$ (i.e.only one node),
and its main ingredients are:
\begin{enumerate}
\item decimating the measurements;
\item constructing a square polynomial system for the unknowns $\left\{ \nd_{j}\right\} $
(in \cite{batFullFourier} this was a single polynomial equation)
;
\item solving the resulting \emph{well-conditioned} system with high accuracy;
\item pruning the spurious solutions and recovering the solution to the
original system.
\end{enumerate}
Step 2 above is a purely symbolic computation based on the structure
of the equations \eqref{eq:prony}, while for step 3 we chose the
homotopy continuation method for polynomial systems, due to the fact
that it will provably find the solution. We propose several alternatives
for the pruning step 4, and discuss their efficiency.

We also show that the proposed algorithm recovers the nodes with high
accuracy \textendash{} in fact, with near-optimal accuracy. Numerical
simulations demonstrate that the algorithm is accurate as predicted,
and outperforms ESPRIT in this setting.

Some of the presented results have been published in abstract form in the proceedings of
the SNC'14 meeting \cite{batenkov_prony_2014}, which took place in
Shanghai, China, during July 2014. 

\subsection{\label{subsec:Assumptions}More on the assumptions}

Let us briefly comment on the specific assumptions made throughout
the paper, and point out some current limitations.
\begin{enumerate}
\item The setting $\left|\nd_{j}\right|=1$ is common in applied harmonic
analysis, where the prototype model for \prettyref{prob:prony} is
to recover a Dirac measure $f\left(x\right)=\sum_{i=1}^{\np}\jc_{i}\delta\left(x-x_{i}\right)$
from the Fourier coefficients $c_{k}\left(f\right)=\frac{1}{2\pi}\int e^{-\imath kt}f\left(t\right)dt$.
Dropping this assumption will in general have severe consequences
in terms of numerical stability of the problem.
\item The confluent/high-multiplicity models are also quite common in inverse
problems involving some sort of derivatives. In the most general formulation,
the multiplicity structure $D$ may be unknown, and it is an important
question to determine it reliably from the data and other a-priori
information. This is an ongoing research effort, and we hope that
the methods of the present paper may serve as a building block for
this goal. See also a discussion in \prettyref{sec:future-work} below.
\item Similarly, extending the treatment to multiple clusters is an ongoing
work. In this regard, our method can be regarded as a \emph{zooming}
technique.
\item The noise level $\er$ is assumed to be sufficiently small. Quantification
of the allowed noise level for the problems to be solvable is a closely
related question and treated elsewhere, see e.g. \cite{akinshin_accuracy_2015,byPronySing12}.
\end{enumerate}

\subsection{Organization of the paper}

In \prettyref{sec:prior-work} we discuss in detail the relevant prior
work, in particular accuracy bounds on (decimated) Prony systems \cite{batenkov2011accuracy,batenkov_numerical_2014}
and the algebraic reconstruction method for the case $\np=1$ from
\cite{batFullFourier,batenkov_algebraic_2012}. The decimated homotopy
algorithm is subsequently developed in \prettyref{sec:decimated-homotopy-algorithm}.
Analysis of the algorithm and its accuracy is presented in \prettyref{sec:Analysis}.
Results of numerical experiments are described in \prettyref{sec:numerical-experiments},
while several future research directions are outlined in \prettyref{sec:future-work}.

\section{\label{sec:prior-work}Accuracy of solving Prony systems, decimation
and algebraic reconstruction}

We start with brief recap of the Prony's method in \prettyref{subsec:Prony-method}.
Following \cite{batenkov_numerical_2014}, in \prettyref{subsec:the-preprint}
we present numerical stability bounds, including in the decimated
scenario, for the system \eqref{eq:prony}. In \prettyref{subsec:one-point}
we discuss the ``algebraic'' reconstruction algorithm for the system
\eqref{eq:prony} with $\np=1$, used in \cite{batFullFourier,batenkov_algebraic_2012},
and highlight some of its key properties, in particular the effect
of decimation on its accuracy. 

\subsection{\label{subsec:Prony-method}Prony's method}

The high degree of symmetry in the system of equations \eqref{eq:prony}
allows to separate the problem into a linear and a nonlinear part.
The basic observation (due to R. de Prony \cite{prony1795essai})
is that the sequence of exact measurements $\{m_{k}\}$ satisfies
a linear recurrence relation
\begin{equation}
\sum_{\ell=0}^{\dg}m_{k+\ell}c_{\ell}=0,\qquad k\in\NN,\label{eq:rec-rel}
\end{equation}
where $\left\{ c_{\ell}\right\} $ are the coefficients of the \emph{Prony
polynomial} defined as
\begin{equation}
P\left(x\right)\isdef\prod_{j=1}^{\np}(x-\nd_{j})^{d_{j}}\equiv\sum_{\ell=0}^{\dg}c_{\ell}x^{\ell}.\label{eq:prony-polynomial}
\end{equation}
Thus, the system \eqref{eq:prony} can be solved for $\nm=2d$ by
the following steps.
\begin{enumerate}
\item Using \eqref{eq:rec-rel}, recover the coefficients $\left\{ c_{\ell}\right\} $
of $P\left(x\right)$ from a non-trivial vector in the nullspace of
the Hankel matrix
\begin{equation}
H_{\dg}\isdef\begin{pmatrix}\mm_{0} & \cdots & \mm_{d-1} & \mm_{d}\\
\iddots & \iddots & \iddots & \vdots\\
\mm_{d-1} & \mm_{d} & \cdots & \mm_{2d-1}
\end{pmatrix}.\label{eq:hankel-data-matrix}
\end{equation}
\item Recover the nodes $\left\{ \nd_{j}\right\} $ by finding the roots
of $\tilde{P}\left(x\right)$ with appropriate multiplicities.
\item Given the the nodes $\left\{ \nd_{j}\right\} $, recover the coefficients
$\left\{ \jc_{\ell,j}\right\} $ by solving a Vandermonde linear system.
\end{enumerate}

\subsection{\label{subsec:the-preprint}Stability bounds and decimation}

Let us first introduce some notation. The number of unknown parameters
is denoted by $\nparams:=\dg+\np$.
\begin{defn}
The \emph{data space} associated to the Prony problem is the a-priori
set of possible solutions
\begin{equation}
\ddd:=\left\{ \left(\jc_{0,1},\dots,\jc_{d_{1}-1,1},\jp_{1},\dots,\jc_{0,\np},\dots,\jc_{d_{\np}-1,\np},\jp_{\np}\right)^{T}\in\CC^{\nparams}:\;\left|\jp_{j}\right|=1\right\} .\label{eq:data-point}
\end{equation}
\end{defn}
We also extensively use the notion of node separation, defined as
follows.
\begin{defn}
\label{def:separation}Let $\vec{x}\in\ddd$ be a data point as in
\eqref{eq:data-point}. For $i\neq j$, let $\delta_{ij}:=\left|\arg z_{i}-\arg z_{j}\right|$
with the convention that $\delta_{ij}\leqslant\pi$. For $i=1,\dots,\np$
the \emph{$i$-th node separation} of $\vec{x}$ is 
\begin{equation}
\delta^{\left(i\right)}=\delta^{\left(i\right)}\left(\vec{x}\right):=\min_{i\neq j}\delta_{ij}.\label{eq:separation-def}
\end{equation}
In addition, we denote the global separation as
\[
\delta=\delta\left(\vec{x}\right):=\min_{i}\delta^{\left(i\right)}.
\]

For any $\nm\geqslant\nparams$, let the \emph{forward mapping} $\fwm:\ddd\to\CC^{\nm}$
be given by the measurements, i.e. for any $\vec{x}\in\ddd$ (see
\eqref{eq:data-point}) we have
\[
\fwm\left(\vec{x}\right):=\left(\meas[0],\dots,\meas[\nm-1]\right)^{T},
\]
where $\meas$ are given by \eqref{eq:prony}.
\end{defn}
A standard measure of sensitivity \cite{dayton_multiple_2011,stetter_numerical_2004}
for well-conditioned polynomial systems is the following.
\begin{defn}
Let $\vec{x}\in\ddd$ be a point in the data space. Assume that $\jac\left(\vec{x}\right)\isdef d\fwm\left(\vec{x}\right)$,
the Jacobian matrix of the mapping $\fwm$ at the point $\vec{x}$,
has full rank. For $\alpha=1,2,\dots,\nparams,$ the \emph{component-wise
condition number }of parameter $\alpha$ at the data point $\vec{x}$
is the quantity
\begin{equation}
CN_{\alpha,\nm}\left(\vec{x}\right):=\sum_{i=1}^{\nm}\left|\pinv\left(\vec{x}\right)_{\alpha,i}\right|,\label{eq:cn-full-def}
\end{equation}
where $\pinv$ is the Moore-Penrose pseudo-inverse of $\jac$.
\end{defn}
In \cite{batenkov_numerical_2014} we show that for $\nm\delta\gg1$,
the Prony system \eqref{eq:prony} is well-conditioned as follows (up to changes of notation and a slightly different noise model, see footnote on page 4 in \cite{batenkov_numerical_2014}):
\begin{thm}[Theorem 2.1 in \cite{batenkov_numerical_2014}]
\label{thm:stability-1}Let $\vec{x}\in\ddd$ be a data point, such
that $\delta=\delta\left(\vec{x}\right)>0$ and $\jc_{d_{j}-1,j}\neq0$
for $j=1,\dots,\np$. Then

\begin{enumerate}
\item The Jacobian matrix $\jac\left(\vec{x}\right)=d\fwm\left(\vec{x}\right)\in\CC^{\nm\times\nparams}$
has full rank.
\item There exist constants $K$, $C^{\left(1\right)}$, not depending on
$\nm$ and $\delta$, such that for $\nm>K\cdot\delta^{-1}$:
\begin{eqnarray}
CN_{z_{j},\nm}\left(\vec{x}\right) & \leqslant & C^{\left(1\right)}\cdot\frac{1}{\left|\jc_{d_{j}-1,j}\right|}.\frac{1}{\nm^{d_{j}}}.\label{eq:good-stability}
\end{eqnarray}
\end{enumerate}
\end{thm}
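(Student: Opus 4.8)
The plan is to analyze the structure of the Jacobian $\jac(\vec{x})$ directly by exploiting the quasi-polynomial/exponential form of the measurements in \eqref{eq:prony}, and then to estimate the relevant rows of the pseudo-inverse via an explicit ``dual'' construction rather than by inverting $\jac^{*}\jac$. First I would write out the columns of $\jac(\vec{x})$: differentiating $m_{k}=\sum_{j}\nd_{j}^{k}\sum_{\ell}\jc_{\ell,j}k^{\ell}$ with respect to $\jc_{\ell,j}$ gives the column $(k^{\ell}\nd_{j}^{k})_{k=0}^{\nm-1}$, and with respect to $\nd_{j}$ gives $(k\nd_{j}^{k-1}\sum_{\ell}\jc_{\ell,j}k^{\ell})_{k}$, i.e. essentially $\nd_{j}^{-1}$ times a combination of $(k^{\ell+1}\nd_{j}^{k})_{k}$. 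Thus all $\nparams$ columns of $\jac$ live in the span of the ``confluent Vandermonde'' vectors $\vec{v}_{\ell,j}:=(k^{\ell}\nd_{j}^{k})_{k=0}^{\nm-1}$ for $j=1,\dots,\np$ and $0\le \ell\le d_{j}$; since the $\nd_{j}$ are distinct these $\dg+\np=\nparams$ vectors are linearly independent, which already gives part (1) (full rank), modulo checking that the change of basis from the $\vec{v}_{\ell,j}$ to the actual Jacobian columns is invertible — it is triangular with diagonal entries $1$ and $\jc_{d_j-1,j}\cdot(\text{const})$, hence the appearance of $|\jc_{d_j-1,j}|^{-1}$.

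Next, for the condition-number bound I would recall that $CN_{z_{j},\nm}(\vec{x})=\sum_{i}|\pinv(\vec{x})_{z_{j},i}|=\|\vec{w}\|_{1}$, where $\vec{w}$ is the unique vector in the \emph{row space} of $\jac$ (equivalently, the column space: $\vec{w}=\jac\vec{u}$ for some $\vec{u}$) that satisfies $\langle \vec{w}, \vec{c}_{\alpha}\rangle = \delta_{\alpha, z_j}$ for every column $\vec{c}_{\alpha}$ of $\jac$. In other words $\vec{w}$ is the minimum-norm ``bi-orthogonal'' functional picking out the $\nd_{j}$-coordinate. The strategy is then to construct an explicit $\vec{w}$ in $\operatorname{span}\{\vec{v}_{\ell,j}\}$ with the required bi-orthogonality and with controlled $\ell_1$ norm, and conclude $CN_{z_j,\nm}(\vec{x}) \le \|\vec{w}\|_1$; minimality only helps us. The natural candidate is built from a suitably normalized ``dual polynomial'': a trigonometric-type polynomial $Q(k)=\sum_{k} \bar{w}_k x^k$-style object that vanishes to order $d_i$ at each $\nd_i$, $i\ne j$, vanishes to order $d_j-1$ at $\nd_j$, and has a prescribed nonzero value of its $(d_j-1)$-st ``derivative'' at $\nd_j$. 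Equivalently, $\vec{w}$ is obtained by sampling a fixed rational/finite combination against the discrete sequence $k=0,\dots,\nm-1$, so that $\vec w_k$ behaves like $\nm^{-d_j}$ times a bounded profile — a Fejér-kernel / de la Vallée-Poussin type construction localized near $\arg\nd_j$, whose bandwidth is forced by $\dg$ and whose width in the $\arg$-variable is $\asymp \nm^{-1}$. Summing $|\vec w_k|$ over the $\nm$ samples then yields the $O(1)\cdot |\jc_{d_j-1,j}|^{-1}\cdot \nm^{-d_j}$ bound, with the separation hypothesis $\nm\delta\gg1$ (i.e. $\nm>K\delta^{-1}$) entering precisely to guarantee that this localized kernel can be made to vanish at the \emph{other} nodes without its mass or its normalization blowing up — this is where the constant $K$ and the $\delta$-dependence of the threshold come from.

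The main obstacle, and the technical heart of the argument, is the explicit construction and norm estimate of this dual/bi-orthogonal vector $\vec{w}$ uniformly in $\nm$ and $\delta$ (subject to $\nm\delta > K$): one must produce a discrete kernel of the right bandwidth that (a) is bi-orthogonal to all $\nparams$ Jacobian columns — a finite set of moment/vanishing conditions scaling correctly with $d_i$ and with the factor $\jc_{d_j-1,j}$ coming from the $\nd_j$-column — and (b) has $\ell_1$ norm $\asymp \nm^{-d_j}$ with the constant independent of $\nm,\delta$. Handling the confluent (high-multiplicity) case requires Hermite-type interpolation and care with the $k^{\ell}$ weights, and the separation-dependent renormalization must be tracked so that the final constant $C^{(1)}$ genuinely does not depend on $\nm$ or $\delta$. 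I would organize this as: (i) a clean statement of the bi-orthogonality characterization of rows of $\pinv$; (ii) a lemma constructing the localized discrete dual kernel and bounding its $\ell_1$ norm; (iii) verifying the vanishing/moment conditions translate the kernel into the required $\vec w$; (iv) assembling the bound, with the step (ii) kernel estimate being the one I expect to be the most delicate.
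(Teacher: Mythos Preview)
The paper does not contain a proof of this theorem. It is stated in \prettyref{subsec:the-preprint} purely as a citation of Theorem~2.1 from \cite{batenkov_numerical_2014}, as part of the background on prior work; the proofs in \prettyref{sec:Analysis} concern only \prettyref{thm:hankelized-well-posedness} and \prettyref{thm:accuracy}. There is therefore no ``paper's own proof'' here against which to compare your proposal.

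As a side remark on the proposal itself: your identification of the Jacobian columns with the confluent Vandermonde vectors $\vec{v}_{\ell,j}=(k^{\ell}\nd_{j}^{k})_{k}$ and the characterization of the relevant row of $\pinv$ as the unique bi-orthogonal vector in the column space of $\jac$ are both sound starting points. However, the subsequent plan conflates two distinct constructions. The bi-orthogonal vector that lies in $\operatorname{span}\{\vec{v}_{\ell,j}\}$ is, by uniqueness, \emph{equal} to (the conjugate of) the pseudo-inverse row, so the phrase ``minimality only helps us'' is moot; moreover that vector is an exponential polynomial in $k$, not a spatially localized bump, and its $\ell_{1}$ norm has to be controlled through the size of its coefficients---i.e.\ through the inverse Gram matrix of the $\vec{v}_{\ell,j}$---rather than through Fej\'er-type localization. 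Conversely, a genuine Fej\'er or de~la~Vall\'ee~Poussin kernel does \emph{not} lie in the $\nparams$-dimensional column space of $\jac$, and for a vector outside that space the bi-orthogonality conditions give no bound on the $\ell_{1}$ norm of the $z_{j}$-row of $\pinv$, since the Moore--Penrose inverse minimizes the $\ell_{2}$ norm, not the $\ell_{1}$ norm. Your step~(ii) would therefore need to be recast as an estimate on the inverse of a confluent-Vandermonde Gram matrix (or an equivalent factorization of $\jac$), which is where the actual work lies.
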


It is easy to show (see e.g. \cite[Appendix A]{batFullFourier}) that
the upper bound on $CN_{\nd_{j},\nm}$ is asymptotically tight.

On the other hand, as numerical experiments in \cite{batenkov_numerical_2014}
show, when $\nm\delta\to0$ then the growth of $CN_{\nd_{j},\nm}$
is much more rapid than $\nm^{d_{j}}$ (obviously  $CN_{\nd_{j},\nm}\to\infty$ as the system becomes singular). As we now argue, this ``phase transition'' near $\nm\delta\sim O\left(1\right)$
can be partially quantified by considering a sequence of decimated
square systems.

Fixing $\nm=\nparams$, we have the following upper bound, which is
tight.
\begin{thm}[Theorem 2.2 in \cite{batenkov_numerical_2014}]
\label{thm:stability-2}Assume the conditions of \prettyref{thm:stability-1},
and furthermore that $\nm=\nparams$. Then there exists a constant
$C^{\left(2\right)}$, not depending on $\vec{x}$ (and in particular
on $\delta$), such that:
\begin{eqnarray}
CN_{z_{j},\nparams}\left(\vec{x}\right) & \leqslant & C^{\left(2\right)}\cdot\left(\frac{1}{\delta^{\left(j\right)}}\right)^{\nparams-d_{j}}\cdot\frac{1}{\left|\jc_{d_{j}-1,j}\right|}.\label{eq:square-stability}
\end{eqnarray}
\end{thm}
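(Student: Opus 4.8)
\emph{Proof strategy.} Because $\nm=\nparams$, part~(1) of \prettyref{thm:stability-1} shows that $\jac(\vec x)$ is a square nonsingular matrix, so $\pinv(\vec x)=\jac(\vec x)^{-1}$ and $CN_{\nd_j,\nparams}(\vec x)$ is exactly the $\ell_{1}$-norm of the row of $\jac(\vec x)^{-1}$ indexed by the variable $\nd_j$. The plan is to produce that row in closed form: it turns out to be, up to an explicit scalar, a single row of the inverse of a confluent Vandermonde matrix, and the coefficients of such a row have a transparent generating-function description that is easy to estimate.

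First I would differentiate \eqref{eq:prony} to read off the columns of $\jac(\vec x)$: the $\partial/\partial\jc_{\ell,i}$-column is $\vec v_{\ell,i}\isdef(k^{\ell}\nd_i^{k})_{k=0}^{\nparams-1}$ and the $\partial/\partial\nd_i$-column is $\nd_i^{-1}\sum_{m=1}^{d_i}\jc_{m-1,i}\,\vec v_{m,i}$. Let $V$ be the $\nparams\times\nparams$ matrix whose columns are all the $\vec v_{\ell,i}$ with $1\le i\le\np$, $0\le\ell\le d_i$; since $\sum_i(d_i+1)=\nparams$ and the $\nd_i$ are distinct, $V$ is an invertible confluent Vandermonde matrix, and the above identities give a factorization $\jac(\vec x)=V\tilde B$ in which $\tilde B$ is block-diagonal, the $i$-th block being the $(d_i+1)\times(d_i+1)$ identity with its last column replaced by $\nd_i^{-1}(0,\jc_{0,i},\dots,\jc_{d_i-1,i})^{T}$. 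That block is invertible precisely because $\jc_{d_i-1,i}\neq0$, and the row of its inverse that extracts the $\partial/\partial\nd_i$-coordinate equals $\nd_i\jc_{d_i-1,i}^{-1}$ times the functional picking out the $\vec v_{d_i,i}$-coordinate. Hence the $\nd_j$-row of $\jac(\vec x)^{-1}=\tilde B^{-1}V^{-1}$ equals $\nd_j\jc_{d_j-1,j}^{-1}$ times the row $\vec r$ of $V^{-1}$ determined by $\langle\vec r,\vec v_{\ell,i}\rangle=\delta_{(i,\ell),(j,d_j)}$ (ordinary dot product, no conjugation), so that $CN_{\nd_j,\nparams}(\vec x)=|\jc_{d_j-1,j}|^{-1}\|\vec r\|_{1}$.

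It remains to bound $\|\vec r\|_1$. Encoding $\vec r=(r_k)_{k=0}^{\nparams-1}$ by $g(x)=\sum_k r_k x^{k}$ and writing $\theta\isdef x\tfrac{d}{dx}$, one has $\langle\vec r,\vec v_{\ell,i}\rangle=(\theta^{\ell}g)(\nd_i)$, so the defining relations say that $g$ has degree $\le\nparams-1$, vanishes to order $d_i+1$ at each $\nd_i$ with $i\neq j$, vanishes to order $d_j$ at $\nd_j$, and obeys $(\theta^{d_j}g)(\nd_j)=1$. The prescribed vanishing orders sum to $\sum_{i\neq j}(d_i+1)+d_j=\nparams-1$, forcing
\[
g(x)=\lambda\,(x-\nd_j)^{d_j}\prod_{i\neq j}(x-\nd_i)^{d_i+1},\qquad |\lambda|=\Bigl(d_j!\prod_{i\neq j}|\nd_j-\nd_i|^{d_i+1}\Bigr)^{-1},
\]
the value of $|\lambda|$ coming from $(\theta^{d_j}g)(\nd_j)=\nd_j^{d_j}g^{(d_j)}(\nd_j)$ (lower derivatives vanish) together with $|\nd_j|=1$. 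Since $\|\vec r\|_1$ is the sum of absolute values of the coefficients of $g$, submultiplicativity of that norm under polynomial multiplication and $\|x-\nd_i\|=|\nd_i|+1=2$ give $\|\vec r\|_1\le2^{\nparams-1}|\lambda|$, while $|\nd_j-\nd_i|=2\sin(\delta_{ij}/2)\ge\tfrac{2}{\pi}\delta_{ij}\ge\tfrac{2}{\pi}\delta^{(j)}$ bounds $|\lambda|$. Collecting terms, $CN_{\nd_j,\nparams}(\vec x)\le C\,|\jc_{d_j-1,j}|^{-1}\bigl(1/\delta^{(j)}\bigr)^{\nparams-d_j-1}$ with $C=2^{d_j}\pi^{\nparams-d_j-1}/d_j!$ depending only on $D$; this is in fact slightly sharper than \eqref{eq:square-stability}, and the stated exponent $\nparams-d_j$ follows at once since $\delta^{(j)}\le\pi$ (here $\np\ge2$).

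The step I expect to be the main obstacle is the second one: keeping the linear-algebra bookkeeping straight — matching the $\nd_j$-row of $\jac(\vec x)^{-1}$ with the correct row of $V^{-1}$, and, above all, pinning down the scalar $\nd_j/\jc_{d_j-1,j}$. This relies on using the bilinear pairing implicit in matrix multiplication (no complex conjugation), on the near-identity structure of the blocks of $\tilde B$, and on the observation that $\jc_{d_j-1,j}$ is exactly the entry of $\tilde B$ coupling the $\partial/\partial\nd_j$-column to the top vector $\vec v_{d_j,j}$. Once this is in place, passing to the polynomial $g$ and carrying out the $\ell_1$-estimate is routine.
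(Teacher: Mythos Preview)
The paper does not prove \prettyref{thm:stability-2}: it is quoted verbatim as Theorem~2.2 of \cite{batenkov_numerical_2014} and used as a black box, so there is no in-paper argument to compare against. Your proposal therefore has to be judged on its own, and it is correct.

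Each step checks out. The factorization $\jac(\vec x)=V\tilde B$ with $V$ the $\nparams\times\nparams$ confluent Vandermonde on the columns $\vec v_{\ell,i}=(k^\ell\nd_i^k)_{k}$ and $\tilde B$ block upper-triangular follows directly from differentiating \eqref{eq:prony}; the $(d_i+1)$-st diagonal entry of the $i$-th block is $\nd_i^{-1}\jc_{d_i-1,i}\neq0$, so the last row of the block inverse is $(0,\dots,0,\nd_i/\jc_{d_i-1,i})$ and the $\nd_j$-row of $\jac^{-1}$ is indeed $\nd_j\jc_{d_j-1,j}^{-1}$ times the $\vec v_{d_j,j}$-row of $V^{-1}$. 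Your interpolation characterization of that row via $g(x)=\sum_k r_k x^k$ and $\theta=x\,d/dx$ is clean, and the counting argument $\sum_{i\neq j}(d_i+1)+d_j=\nparams-1$ pins $g$ down uniquely up to the scalar $\lambda$. The $\ell_1$ estimate via submultiplicativity of the coefficient norm, $\|x-\nd_i\|_1=2$, and the chord bound $|\nd_j-\nd_i|\ge\frac{2}{\pi}\delta^{(j)}$ are all standard and correctly applied.

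One remark: you actually obtain the exponent $\nparams-d_j-1$, strictly better than the $\nparams-d_j$ stated in \eqref{eq:square-stability}; your last line absorbs the discrepancy using $\delta^{(j)}\le\pi$, which is fine. This sharper exponent is consistent with how the paper uses the result downstream (compare the derivation preceding \eqref{eq:acc-decim-hankel} in \prettyref{subsec:proofs}, where the same $\nparams-d_i-1$ appears before being relaxed). Your self-identified ``main obstacle'' --- the bookkeeping in matching rows of $\tilde B^{-1}$ and $V^{-1}$ --- is handled correctly; no gap there.
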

A natural question is whether increasing $\nm$ can essentially improve
the bound \eqref{eq:square-stability} above. One possible answer
is given by what we call ``decimation'', as follows.
\begin{defn}
Let $\df\in\NN$ be a positive integer. The \emph{decimated Prony
system with parameter $\df$ }is given by
\begin{equation}
n_{k}\isdef\meas[\df k]=\sum_{j=1}^{\np}\jp_{j}^{\df k}\sum_{\ell=0}^{d_{j}-1}\left(\jc_{\ell,j}\df^{\ell}\right)k^{\ell},\quad k=0,1,\dots,\nparams-1.\label{eq:decimated-prony-def}
\end{equation}
\end{defn}
\begin{minipage}[t]{1\columnwidth}%
\end{minipage}
\begin{defn}
The \emph{decimated forward map} $\decmap:\CC^{\nparams}\to\CC^{\nparams}$
is given by
\[
\decmap\left(\vec{x}\right):=\left(n_{0},\dots,n_{\nparams-1}\right),
\]
where $\vec{x}\in\ddd$ is as in \eqref{eq:data-point} and $n_{k}$
are given by \eqref{eq:decimated-prony-def}.
\end{defn}
\begin{minipage}[t]{1\columnwidth}%
\end{minipage}
\begin{defn}
The decimated condition numbers $CN_{\alpha}^{\left(\df\right)}$
are defined as 
\begin{equation}
CN_{\alpha}^{\left(\df\right)}\left(\vec{x}\right):=\sum_{i=1}^{\nparams}\left|\left(\left\{ \decjac\left(\vec{x}\right)\right\} ^{-1}\right)_{\alpha,i}\right|,\label{eq:cn-dec-def}
\end{equation}
where $\decjac\left(\vec{x}\right)$ is the Jacobian of the decimated
map $\decmap$ (the definition applies at every point $\vec{x}$ where
the Jacobian is non-degenerate).
\end{defn}
The usefulness of decimation becomes clear given the following result.
\begin{thm}[Corollary 3.1 in \cite{batenkov_numerical_2014}]
\label{thm:decimated-accuracy}Assume the conditions of \prettyref{thm:stability-1}.
Assume further that $\mbox{\nm\ensuremath{\delta^{*}}<\ensuremath{\pi}\nparams}$
where $\delta^{*}:=\max_{i\neq j}\delta_{ij}$ (i.e. all nodes form
a cluster). Then the condition numbers of the decimated system \eqref{eq:decimated-prony-def}
with parameter $\df^{*}:=\left\lfloor \frac{\nm}{\nparams}\right\rfloor $
satisfy
\begin{eqnarray}
CN_{z_{j}}^{\left(\df^{*}\right)}\left(\vec{x}\right) & \leqslant & C^{\left(3\right)}\cdot\left(\frac{1}{\delta^{\left(j\right)}}\right)^{\nparams-d_{j}}\frac{1}{\left|\jc_{d_{j}-1,j}\right|}\cdot\frac{1}{\nm^{\nparams}}.\label{eq:decimated-stability}
\end{eqnarray}
\end{thm}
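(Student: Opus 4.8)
The plan is to deduce \prettyref{thm:decimated-accuracy} from \prettyref{thm:stability-2} by viewing the decimated system \eqref{eq:decimated-prony-def} as a rescaled copy of a non-decimated square system on a new data point, and then controlling the separation of that new data point. Concretely, observe that the decimated measurement $n_k = \meas[\df k]$ is, up to the reparametrization $\jc_{\ell,j}\mapsto \jc_{\ell,j}\df^{\ell}$ and $\nd_j\mapsto\nd_j^{\df}$, exactly a Prony measurement with the \emph{same} multiplicity vector $D$. So if we let $\vec{x}^{(\df)}$ denote the data point with nodes $\nd_j^{\df}$ and coefficients $\jc_{\ell,j}\df^{\ell}$, then $\decmap(\vec{x}) = \fwm[\nparams]$ evaluated at $\vec{x}^{(\df)}$, and the chain rule gives a clean factorization $\decjac(\vec{x}) = \jac[\nparams](\vec{x}^{(\df)})\cdot A$, where $A = A(\df)$ is the diagonal change-of-variables matrix with entries $\df^{\ell}$ (for the coefficient variables) and $\df\nd_j^{\df-1}$ (for the node variables). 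Inverting and reading off row $\alpha = z_j$ in the definition \eqref{eq:cn-dec-def}, one gets $CN_{z_j}^{(\df)}(\vec{x}) \le \frac{1}{\df} \cdot CN_{z_j,\nparams}(\vec{x}^{(\df)})$ (the $\df^{-1}$ coming from the $A^{-1}$ factor hitting the $z_j$-row, since $|\nd_j|=1$).

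Next I would apply \prettyref{thm:stability-2} to the point $\vec{x}^{(\df)}$. This requires checking its hypotheses: the top coefficient $\jc_{d_j-1,j}\df^{d_j-1}\neq 0$ is immediate, and I need the new nodes $\nd_j^{\df}$ to be pairwise distinct with a controlled separation $\delta^{(j)}(\vec{x}^{(\df)})$. Here is where the clustering hypothesis $\nm\delta^* < \pi\nparams$ and the choice $\df^* = \lfloor \nm/\nparams\rfloor$ enter: since all pairwise angular distances $\delta_{ij}$ are at most $\delta^* < \pi\nparams/\nm \le \pi/\df^*$, multiplication of arguments by $\df^*$ does not cause any wraparound past $\pi$, so $\delta_{ij}(\vec{x}^{(\df^*)}) = \df^* \delta_{ij}$ exactly, hence the nodes stay distinct and $\delta^{(j)}(\vec{x}^{(\df^*)}) = \df^*\delta^{(j)}(\vec{x})$. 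Plugging into \eqref{eq:square-stability}:
\[
CN_{z_j,\nparams}(\vec{x}^{(\df^*)}) \le C^{(2)}\left(\frac{1}{\df^*\delta^{(j)}}\right)^{\nparams - d_j}\frac{1}{\left|\jc_{d_j-1,j}\right|\,(\df^*)^{d_j-1}}.
\]
Combining with the factor $\frac{1}{\df^*}$ from the previous step yields $CN_{z_j}^{(\df^*)}(\vec{x}) \le C^{(2)}\left(\frac{1}{\delta^{(j)}}\right)^{\nparams-d_j}\frac{1}{\left|\jc_{d_j-1,j}\right|}\cdot(\df^*)^{-\nparams}$, and finally I would replace $\df^* = \lfloor\nm/\nparams\rfloor \ge \nm/(2\nparams)$ (valid once $\nm\ge\nparams$) to turn $(\df^*)^{-\nparams}$ into $C^{(3)}\nm^{-\nparams}$, absorbing the $(2\nparams)^{\nparams}$ into the new constant $C^{(3)}$.

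The main obstacle I anticipate is the bookkeeping in the Jacobian factorization — making sure the change-of-variables matrix $A$ is genuinely diagonal (or at least block-triangular with an easily invertible structure) in the chosen ordering of the variables $(\jc_{0,j},\dots,\jc_{d_j-1,j},\nd_j)$, and that the $\df^{-1}$ really lands only on the $z_j$ rows rather than mixing with the coefficient rows; a sloppy ordering could produce off-diagonal terms and spoil the clean bound. The separation/wraparound argument is the other point needing care: one must verify that $\df^*\delta^* < \pi$ genuinely follows from $\nm\delta^* < \pi\nparams$ together with $\df^* = \lfloor\nm/\nparams\rfloor$, and handle the degenerate edge case $\delta^* = 0$ (all nodes equal, $\np$ must then be $1$) separately or rule it out. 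Everything else is routine: the application of \prettyref{thm:stability-2} is a black box, and the final constant manipulation is elementary.
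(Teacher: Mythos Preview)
Your proposal is correct and follows exactly the approach sketched in the paper: view decimation as the composition $\decmap=\fwm[\nparams]\circ\scmap[\df^{*}]$, use the (diagonal) Jacobian of $\scmap[\df^{*}]$ to pull the factor $\df^{*-1}$ onto the $z_j$-row, apply \prettyref{thm:stability-2} at the rescaled point with separation $\df^{*}\delta^{(j)}$ (no wraparound since $\df^{*}\delta^{*}\le(\nm/\nparams)\delta^{*}<\pi$), and absorb $(\df^{*})^{-\nparams}\le(2\nparams/\nm)^{\nparams}$ into the constant. Your anticipated obstacles are non-issues: $\scmap[\df]$ acts coordinatewise so $d\scmap[\df]$ is genuinely diagonal in the ordering \eqref{eq:data-point}, and the case $\delta^{*}=0$ is excluded by the standing hypothesis that the $\nd_j$ are pairwise distinct.
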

The intuition behind this result is that decimation with parameter
$\df$ is in fact equivalent to applying the Prony mapping $\fwm[\nparams]$
to a rescaled data point $\mbox{\ensuremath{\vec{y}}:=\ensuremath{\scmap}{\ensuremath{\left(\vec{x}\right)}}}$,
where
\begin{eqnarray}
\begin{split}\scmap\left(\left(\jc_{0,1},\dots,\jc_{d_{1}-1,1},\jp_{1},\dots,\jc_{0,\np},\dots,\jc_{d_{\np}-1,\np},\jp_{\np}\right)^{T}\right) & \isdef\\
\left(b_{0,1},\dots,b_{d_{1}-1,1},w_{1},\dots,b_{0,\np},\dots,b_{d_{\np}-1,\np},w_{\np}\right)^{T} & =\\
\left(\jc_{0,1}\cdot\df^{0},\dots,\jc_{d_{1}-1,1}\cdot\df^{d_{1}-1},\jp_{1}^{\df},\dots,\jc_{0,\np}\cdot\df^{0},\dots,\jc_{d_{\np}-1,\np}\cdot\df^{d_{\np}-1},\jp_{\np}^{\df}\right)^{T}.
\end{split}
\label{eq:scalemap-def}
\end{eqnarray}
Since for small $\delta^{\left(j\right)}$ we have that $\min_{i\neq j}\left|\nd_{i}^{\df}-\nd_{j}^{\df}\right|\approx\delta^{\left(j\right)}\df$,
\eqref{eq:decimated-stability} follows from the above and \eqref{eq:square-stability}.

Experimental evidence suggests that decimation is nearly optimal in
the ``near-collision'' region, i.e.
\begin{equation}
CN_{\nd_{j}}^{\left(\df^{*}\right)}\left(\vec{x}\right)\approx CN_{\nd_{j},\nm}\left(\vec{x}\right),\qquad\nm\ensuremath{\delta^{*}}<\ensuremath{\pi}\nparams.\label{eq:empirical-optimality-of-decimation}
\end{equation}
We believe that it is an important question to provide a good quantification
of \eqref{eq:empirical-optimality-of-decimation}.

From the practical perspective, the above results suggest a nearly-optimal (in the
sense of conditioning) approach to numerically solving the system
\eqref{eq:prony} when all nodes are clustered - namely, to pick up
the $\nparams$ evenly spaced measurements
\[
\left\{ \meas[0],\meas[\df],\dots,\meas[\left(\nparams-1\right)\df]\right\} 
\]
and solve the resulting square system. 

An important feature of the decimation approach is that it introduces
\emph{aliasing} for the nodes - indeed, the system \eqref{eq:decimated-prony-def}
has $w_{j}=\jp_{j}^{\df}$ as the solution instead of $\jp_{j}$,
and therefore after solving \eqref{eq:decimated-prony-def}, the algorithm
must select the correct value for the $\df^{\text{-th}}$ root $\left(\tilde{w}_{j}\right)^{\frac{1}{\df}}$.
Thus, either the algorithm should start with an approximation of the
correct value (and thus decimation will be used as a fine-tuning technique),
or it should choose one among the $\df$ candidates via some pruning
technique - for instance, by calculating the discrepancy with the
other measurements, which were not originally utilized in the decimated
calculation.

\subsection{\label{subsec:one-point}Algebraic reconstruction}

Although many solution methods for the system \eqref{eq:prony} exist,
as we mentioned they are not well-suited for dealing with multiple
roots/eigenvalues. While averaging might work well in practice, it
is difficult to analyze rigorously, and in particular to prove the
resulting method's rate of convergence. 

In \cite{batFullFourier,batenkov_algebraic_2012} we developed a method
based on accurate solution of Prony system for resolving the Gibbs
phenomenon, i.e. for accurate recovery of a piecewise-smooth function
from its first $\nm$ Fourier coefficients. This problem arises in
spectral methods for numerical solutions of nonlinear PDEs with shock
discontinuities, and was first investigated by K.Eckhoff in the 90's
\cite{eckhoff1995arf}. The key problem was to develop a method which,
given the left-hand side $\left\{ \meas\right\} _{k=0}^{\nm-1}$ of
\eqref{eq:prony} with error decaying as $\left|\Delta\meas\right|\sim k^{-1}$
, would recover the nodes $\left\{ \nd_{j}\right\} $ with accuracy
not worse than $\left|\Delta\nd_{j}\right|\sim\nm^{-d_{j}-1}$.

Our solution was based on two main ideas:
\begin{enumerate}
\item Due to the specifics of the problem, it was sufficient to provide
a solution method as described above \emph{in the case of a single
node, i.e. }$\np=1$. 
\item The resulting system was solved by decimation, elimination of the
linear variables $\left\{ \jc_{\ell,j}\right\} $, and polynomial
root finding.
\end{enumerate}
The elimination step is a direct application of the recurrence relation
\eqref{eq:rec-rel} for the coefficients of the Prony polynomial \eqref{eq:prony-polynomial},
as follows. The (unperturbed) system \eqref{eq:prony} for $\np=1$
reads
\begin{equation}
\meas=\nd^{k}\sum_{\ell=0}^{d-1}\jc_{\ell}k^{\ell}.\label{eq:one-node-unperturbed}
\end{equation}
The corresponding decimated system \eqref{eq:decimated-prony-def}
with parameter $\df$ is
\[
n_{k}=\meas[\df k]=\left(\nd^{\df}\right)^{k}\sum_{\ell=0}^{d-1}\left(\jc_{\ell}\df^{\ell}\right)k^{\ell},\quad k=1,2,\dots,d+1.
\]

Denote $\rho\isdef\nd^{\df}$. Then clearly the sequence $\left\{ n_{k}\right\} $
satisfies $\mbox{\ensuremath{\sum_{\ell=0}^{d}n_{k+\ell}c_{\ell}}=0}$,
where the Prony polynomial is just $\mbox{\ensuremath{\left(x-\rho\right)^{d}\equiv\sum_{\ell=0}^{d}c_{\ell}x^{\ell}}}$.
That is, $\mbox{\ensuremath{c_{\ell}}=\ensuremath{\left(-1\right)^{\ell}{d  \choose \ell}\rho^{d-\ell}}}$
and we obtain that $\rho$ is one of the roots of the unperturbed
polynomial
\begin{equation}
q_{\df}\left(u\right)\isdef\sum_{\ell=0}^{d}\left(-1\right)^{\ell}{d \choose \ell}n_{\ell+1}u^{d-\ell}.\label{eq:elimination-polynomial}
\end{equation}

The algebraic reconstruction method for $\np=1$ (\cite[Algorithm 2]{batFullFourier})
is summarized in \prettyref{alg:one-point-alg} \vpageref{alg:one-point-alg}.

\begin{algorithm}
\begin{enumerate}
\item Set decimation parameter to $\df^{*}:=\left\lfloor \frac{\nm}{d+1}\right\rfloor $.
\item Construct the polynomial $\tilde{q}_{\df^{*}}\left(u\right)$ from
the given perturbed measurements $\left\{ \tilde{\meas}\right\} _{k=0}^{\nm-1}$:
\[
\tilde{q}_{\df^{*}}\left(u\right)\isdef\sum_{\ell=0}^{d}\left(-1\right)^{\ell}{d \choose \ell}\tilde{m}_{\df^{*}\left(\ell+1\right)}u^{d-\ell}.
\]
\item Set $\tilde{\rho}$ to be the root of $\tilde{q}_{\df^{*}}$ closest
to the unit circle in $\CC$.
\item Choose the solution $z^{*}$ to \eqref{eq:one-node-unperturbed} among
the $\df^{*}$ possible values of $\left(\rho^{*}\right)^{\frac{1}{\df^{*}}}$
according to available a-priori approximation.
\end{enumerate}
\caption{Algebraic reconstruction of a single node}
\label{alg:one-point-alg}

\end{algorithm}

The key result of \cite{batFullFourier} is that as $\nm\to\infty$
(and therefore $\df^{*}\to\infty$ as well), and assuming perturbation
of size $\varepsilon$ for the coefficients $\left\{ \tilde{m}_{k}\right\} $,
all the $d$ roots of $\tilde{q}_{\df^{*}}$ remain simple and well-separated,
while the corresponding perturbation of the root $\rho^{*}$ is bounded
by
\[
\left|\tilde{\rho}-\rho\right|\lessapprox\nm^{-\left(d-1\right)}\varepsilon\Longrightarrow\left|\tilde{\nd}-\nd\right|\lessapprox\nm^{-d}\varepsilon.
\]
Thus, the method is optimal - recall the condition estimate \eqref{eq:good-stability}.

The pruning step 3 was shown to be valid since the unperturbed polynomial
$q_{\df^{*}}$ has only one root on the unit circle. Regarding step
4, it was shown that a sufficiently accurate initial approximation
can be obtained by the previous method of \cite{batenkov_algebraic_2012}.
\begin{rem}
Decimation acts as a kind of regularization for the otherwise ill-conditioned
multiple root. To see why, consider the case $d=2$. Then we have
\[
\meas=\nd^{k}\left(\jc_{0}+k\jc_{1}\right).
\]
The Prony polynomial is $P\left(x\right)=\left(x-z\right)^{2}$, and
thus for each $k\in\NN$ the point $z$ is a root of
\[
q_{k}^{\#}\left(u\right)\isdef\meas u^{2}-2u\meas[k+1]+\meas[k+2].
\]
As $k\to\infty$, the above polynomial ``approaches in the limit''
\[
\frac{q_{k}^{\#}\left(u\right)}{k}\to\jc_{1}\nd^{k}\left(u-\nd\right)^{2}.
\]
Thus, a ``non-decimated'' analogue of \prettyref{alg:one-point-alg}
(such as \cite{batenkov2011accuracy,eckhoff1995arf}) would be recovering
an ``almost double'' root $\nd^{*}$, and it is well-known that
the accuracy of reconstruction in this case is only of the order $\sqrt{\varepsilon}$
when the data is perturbed by $\varepsilon$. On the other hand, $q_{\df}\left(u\right)=\meas[\df]u^{2}-2u\meas[2\df]+\meas[3\df]$,
and as $\df\to\infty$ it is easy to see that
\[
\frac{q_{\df}\left(u\right)}{\df}\to\jc_{1}\rho\left(u^{2}-4\rho u+3\rho^{2}\right)=\jc_{1}\rho\left(u-\rho\right)\left(u-3\rho\right),
\]
i.e. the limiting roots are well-conditioned.
\end{rem}

\section{\label{sec:decimated-homotopy-algorithm}Decimated homotopy algorithm}

In this section we develop the decimated homotopy algorithm, which
is a generalization of \prettyref{alg:one-point-alg} to the case
$\np>1$. We assume that the multiplicity $D$ is known, and the noise
level is small enough so that accurate recovery of the nodes by solving
the decimated system \eqref{eq:decimated-prony-def} according to
\prettyref{thm:decimated-accuracy} is possible. 

Recall that the feasible solutions are restricted to the complex torus
\[
\TT^{\np}\isdef\left\{ \vec{\nd}\in\CC^{\np}:\;\left|\left(\vec{\nd}\right)_{i}\right|=1,\;i=1,\dots,\np\right\} .
\]

\subsection{Construction of the system}

\global\long\def\symf{\tau}

Consider the decimated system \eqref{eq:decimated-prony-def} with
fixed parameter $\df$. Denote $w_{j}\isdef\nd_{j}^{\df}$. The decimated
measurements $\left\{ n_{k}\right\} _{k=0}^{\nparams-1}$ satisfy
for each $k=0,\dots,\np-1$
\[
\sum_{i=0}^{d}n_{k+i}c_{i}=0,
\]
where $c_{i}$ are the coefficients of the Prony polynomial
\[
P\left(x\right)=\prod_{j=1}^{\np}\left(x-w_{j}\right)^{d_{j}}\equiv\sum_{\ell=0}^{d}c_{\ell}x^{\ell}.
\]
Let $\sigma_{i}\left(x_{1},\dots,x_{d}\right)$ denote the elementary
symmetric polynomial of order $i$ in $d$ variables. Then we have
\begin{equation}
c_{\ell}=\left(-1\right)^{d-\ell}\sigma_{d-\ell}\left(\underbrace{w_{1},\dots,w_{1}}_{\times d_{1}},\dots,\underbrace{w_{\np},\dots,w_{\np}}_{\times d_{\np}}\right)\isdef\symf_{\ell}\left(w_{1},\dots,w_{\np}\right).\label{eq:symf-def}
\end{equation}

Thus the point $\vec{w}=\left(w_{1},\dots,w_{\np}\right)\in\TT^{\np}$
is a zero of the $\np\times\np$ polynomial system
\begin{equation}
\left\{ f_{k}^{\left(\df\right)}\left(\vec{u}\right)\isdef\sum_{i=0}^{d}n_{k+i}\symf_{i}\left(\vec{u}\right)=0\right\} _{k=0,\dots,\np-1}.\label{eq:the-hankel-system}
\end{equation}
This Hankel-type system is therefore our proposed generalization to
the polynomial equation \eqref{eq:elimination-polynomial}. 
\begin{example}
$\np=2,\;d_{j}=2$. The system \eqref{eq:the-hankel-system} reads{\footnotesize{}
\[
\begin{bmatrix}f_{0}\left(\vec{u}\right)\\
f_{1}\left(\vec{u}\right)
\end{bmatrix}=\left[\begin{matrix}n_{0}u_{1}^{2}u_{2}^{2}+n_{1}\left(-2u_{1}^{2}u_{2}-2u_{1}u_{2}^{2}\right)+n_{2}\left(u_{1}^{2}+4u_{1}u_{2}+u_{2}^{2}\right)+n_{3}\left(-2u_{1}-2u_{2}\right)+n_{4}\\
n_{1}u_{1}^{2}u_{2}^{2}+n_{2}\left(-2u_{1}^{2}u_{2}-2u_{1}u_{2}^{2}\right)+n_{3}\left(u_{1}^{2}+4u_{1}u_{2}+u_{2}^{2}\right)+n_{4}\left(-2u_{1}-2u_{2}\right)+n_{5}
\end{matrix}\right]
\]
}{\footnotesize \par}
\end{example}

\subsection{Recovering the solution}

Generalizing the root finding step of \prettyref{alg:one-point-alg},
we propose to use the homotopy continuation method in order to find
all the isolated solutions of the (perturbed) system \eqref{eq:the-hankel-system}. 

While a-priori it is not clear whether the variety defined by \eqref{eq:the-hankel-system}
has positive-dimensional components, we show in \prettyref{sec:Analysis}
below that our wanted solution is indeed isolated, and therefore the
homotopy will find it. Furthermore, by analyzing the Jacobian of the
polynomial map at the solution and comparing it with the estimate
\eqref{eq:decimated-stability}, we show that the obtained accuracy
is optimal.

We now consider the question of how to recover the correct solution
of the original problem \eqref{eq:prony} from among all the isolated
solutions
\[
{\cal S}=\left\{ \vec{u}_{1},\dots,\vec{u}_{S}\right\} 
\]
of \eqref{eq:the-hankel-system}. Two issues need to be addressed.
\begin{enumerate}
\item \emph{Spurious solutions.} Transformation to the Hankel-type polynomial
system introduces spurious solutions $\vec{u}\in\CC^{\np}$ which
are not in $\TT^{\np}$ and therefore cannot be equal to the $p$-th power
of a solution to the original system \eqref{eq:prony}.
\item \emph{Aliasing}. Given a solution $\vec{u}\in{\cal S}\cap\TT^{\np}$
to the $\df$-decimated system \eqref{eq:the-hankel-system}, there
are in general $\df^{\np}$ possible corresponding solutions to \eqref{eq:prony},
namely
\[
{\cal Z}_{\df}\left(\vec{u}\right)\isdef\left\{ \left(\nd_{1},\dots,\nd_{\np}\right)\in\TT^{\np}:\;\left(\nd_{i}\right)^{\df}=\left(\vec{u}\right)_{i}\right\} .
\]
\end{enumerate}
Overall, the set of all possible candidate solutions is
\[
{\cal G}:=\bigcup_{\vec{u}\in{\cal S}}{\cal Z}_{\df}\left(\vec{u}\right).
\]

We suggest several pruning strategies.

\paragraph{Exhaustive search}

Since we have access to the original system, we can select the solution
giving the smallest residual for the non-decimated equations \eqref{eq:the-hankel-system},
i.e.
\begin{equation}
\vec{\nd}^{*}\isdef\argmin_{\vec{\nd}\in{\cal G}}\sum_{k=0}^{d-1}\left|f_{k}^{\left(1\right)}\left(\vec{z}\right)\right|.\label{eq:exhaustive}
\end{equation}
Optionally, the upper index in the summation in \eqref{eq:exhaustive}
can be increased to $\nm-d-1$.

\paragraph{Pre-filtering}

Instead of considering the whole set ${\cal G}$, one can first prune
${\cal S}$ and work with the solution which has smallest distance
to $\TT^{\np}$, i.e.:
\begin{eqnarray}
\begin{split}\vec{u}^{*} & \leftarrow\argmin_{\vec{u}_{k}\in{\cal S}}\max_{i=1,\dots,\np}\left|1-\left|\left(\vec{u}_{k}\right)_{i}\right|\right|\\
\left(\vec{u}^{*}\right)_{i} & \leftarrow\frac{\left(\vec{u}^{*}\right)_{i}}{\left|\left(\vec{u}^{*}\right)_{i}\right|}\\
{\cal G} & \leftarrow{\cal Z}_{\df}\left(\vec{u}^{*}\right).
\end{split}
\label{eq:heuristic-prefilter}
\end{eqnarray}

\paragraph{Using an initial approximation}

In some applications it may be possible to obtain an a-priori approximation
to the location of the desired solution $\vec{\nd}$. So suppose that
the algorithm is provided with $\vec{\nd}_{init}\in\TT^{\np}$ and
a threshold $\eta>0$, then additional pruning can be achieved by
setting
\begin{equation}
{\cal G}\leftarrow\left\{ \vec{\nd}\in{\cal G}:\;\left|\vec{\nd}-\vec{\nd}_{init}\right|\leqslant\eta\right\} .\label{eq:heuristic-approx}
\end{equation}

The strategies can be mixed, i.e. pre-filtering \eqref{eq:heuristic-prefilter}
can be followed by either \eqref{eq:exhaustive} or \eqref{eq:heuristic-approx}
etc.

The decimated homotopy is summarized in \prettyref{alg:decimated-homotopy}\vpageref{alg:decimated-homotopy}.

In the next section we prove that for small enough noise, the exhaustive
search \eqref{eq:exhaustive} is guaranteed to produce the approximation
to the original solution, which is near-optimal. However, this strategy
quickly becomes prohibitive. The pruning strategy \eqref{eq:heuristic-prefilter}
combined with \eqref{eq:heuristic-approx} is empirically shown to
be as accurate and much faster, see \prettyref{sec:numerical-experiments}.

\begin{algorithm}
Given: $\left(\mm_{0},\dots,\mm_{\nm-1}\right)\in\CC^{\nm}$, multiplicity
$D$.
\begin{enumerate}
\item Set decimation parameter $\df^{*}=\left\lfloor \frac{\nm}{\nparams}\right\rfloor $.
\item Construct the system
\[
{\cal H}_{\df^{*}}:\;\left\{ f_{k}^{\left(\df^{*}\right)}\left(\vec{u}\right)\isdef\sum_{i=0}^{d}\mm_{\df^{*}\left(k+i\right)}\symf_{i}\left(\vec{u}\right)=0\right\} _{k=0,\dots,\np-1}.
\]
\item Solve ${\cal H}_{\df^{*}}$ by homotopy continuation method. Let ${\cal S}$
be the collection of its isolated solutions.
\item Select $\vec{\nd}^{*}\in{\cal G}$ using either \eqref{eq:exhaustive}
, \eqref{eq:heuristic-prefilter}, \eqref{eq:heuristic-approx}, or
a combination thereof.
\end{enumerate}
\caption{Decimated homotopy algorithm}
\label{alg:decimated-homotopy}

\end{algorithm}

\section{\label{sec:Analysis}Analysis}

In this section we analyze the proposed algorithm in the case of a
single cluster. We show that for small enough noise level, the exhaustive
search \eqref{eq:exhaustive} is guaranteed to produce a near-optimal
approximation to the original solution.

\subsection{Statement of the results}

\global\long\def\jacH{{\cal D}}

Let $\vec{x}\in\ddd$ be a data point \eqref{eq:data-point} satisfying
the conditions of \prettyref{thm:decimated-accuracy}, and $\df^{*}$
be the corresponding decimation parameter. The corresponding decimated
measurements $n_{k}=n_{k}\left(\vec{x}\right)$ are defined as in
\eqref{eq:decimated-prony-def}:
\[
n_{k}=\sum_{j=1}^{\np}w_{j}^{k}\sum_{\ell=0}^{d_{j}-1}b_{\ell,j}k^{\ell},
\]
where $w_{j}=\nd_{j}^{\df^{*}}$ and $b_{\ell,j}=\jc_{\ell,j}\df^{*\ell}$.
As before, $\vec{w}=\vec{w}\left(\vec{x}\right)=\left(w_{1},\dots,w_{\np}\right)$.
\begin{defn}
Let $\jacH_{\vec{x}}\left(\vec{t}\right)\in\CC^{\np\times\np}$ denote
the Jacobian matrix of the system \eqref{eq:the-hankel-system} at
the point $\vec{u}=\vec{t}$:
\[
\jacH_{\vec{x}}\left(\vec{t}\right)\isdef\left(\frac{\partial f_{k}\left(\vec{x}\right)}{\partial u_{j}}\Big\vert_{\vec{t}}\right)_{j=1,\dots,\np}^{k=0,\dots,\np-1}.
\]
\end{defn}
\begin{thm}
\label{thm:hankelized-well-posedness}Let the conditions of \prettyref{thm:decimated-accuracy}
be satisfied. Then $\det\jacH_{\vec{x}}\left(w\right)\neq0$. Therefore,
$\vec{u}=\vec{w}$ is an isolated solution of \eqref{eq:the-hankel-system}.
\end{thm}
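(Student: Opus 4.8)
The plan is to compute the Jacobian $\jacH_{\vec{x}}(\vec{w})$ in closed form and exhibit it as a nonsingular Vandermonde matrix post-multiplied by an invertible diagonal matrix.

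\emph{Step 1 (reformulation via the shift operator).} For $\vec{u}=(u_1,\dots,u_{\np})$ set $Q_{\vec{u}}(x)\isdef\prod_{m=1}^{\np}(x-u_m)^{d_m}=\sum_{i=0}^{\dg}\symf_i(\vec{u})\,x^i$, so that $Q_{\vec{w}}=P$, and let $\shift$ denote the shift $(\shift n)_k=n_{k+1}$ acting on the decimated sequence. By \eqref{eq:the-hankel-system} and \eqref{eq:symf-def}, $f_k^{(\df)}(\vec{u})=\bigl(Q_{\vec{u}}(\shift)\,n\bigr)_k$. Differentiating the product form gives $\partial_{u_j}Q_{\vec{u}}(x)=-d_j(x-u_j)^{d_j-1}\prod_{m\neq j}(x-u_m)^{d_m}$, which at $\vec{u}=\vec{w}$ equals $-d_j\,P(x)/(x-w_j)$; since $(x-w_j)$ divides $P(x)$, this is a genuine polynomial of degree $\dg-1$, and therefore
\[
\frac{\partial f_k}{\partial u_j}\Big\vert_{\vec{w}}=-\,d_j\left(\frac{P(\shift)}{\shift-w_j}\,n\right)_{k}.
\]

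\emph{Step 2 (evaluating the operator on $n$).} Write $n_k=\sum_{m=1}^{\np}w_m^k\,q_m(k)$ with $\deg q_m=d_m-1$ and leading coefficient $b_{d_m-1,m}=\jc_{d_m-1,m}\,(\df^{*})^{d_m-1}\neq0$. Applying $\prod_{m\neq j}(\shift-w_m)^{d_m}$ annihilates every summand with index $m\neq j$, and carries $w_j^k q_j(k)$ to $w_j^k\hat q_j(k)$ with $\deg\hat q_j=d_j-1$ and leading coefficient $b_{d_j-1,j}\prod_{m\neq j}(w_j-w_m)^{d_m}$ — each factor $\shift-w_m$ (with $w_m\neq w_j$) multiplies the leading coefficient by $w_j-w_m$ and preserves the degree. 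Then each remaining factor $(\shift-w_j)$ sends $w_j^k r(k)$ to $w_j^{k+1}(\Delta r)(k)$, lowering the polynomial degree by one, so after the $d_j-1$ factors of $(\shift-w_j)^{d_j-1}$ the polynomial $\hat q_j$ collapses to the constant $(d_j-1)!\;b_{d_j-1,j}\prod_{m\neq j}(w_j-w_m)^{d_m}$. Collecting the factors,
\[
\frac{\partial f_k}{\partial u_j}\Big\vert_{\vec{w}}=\gamma_j\,w_j^{\,k},\qquad \gamma_j\isdef-\,d_j!\;w_j^{\,d_j-1}\,b_{d_j-1,j}\prod_{m\neq j}(w_j-w_m)^{d_m}.
\]

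\emph{Step 3 (the determinant and isolation).} Hence $\jacH_{\vec{x}}(\vec{w})=\left(\gamma_j\,w_j^{\,k}\right)_{k=0,\dots,\np-1}^{j=1,\dots,\np}$ is a Vandermonde matrix with its $j$-th column scaled by $\gamma_j$, so $\det\jacH_{\vec{x}}(\vec{w})=\bigl(\prod_{j=1}^{\np}\gamma_j\bigr)\prod_{1\leqslant i<j\leqslant\np}(w_j-w_i)$. This is nonzero: $|w_j|=1$ gives $w_j\neq0$; $\jc_{d_j-1,j}\neq0$ by hypothesis, hence $b_{d_j-1,j}\neq0$; and the $w_j=\nd_j^{\df^{*}}$ are pairwise distinct because, under the single-cluster hypothesis $\nm\delta^{*}<\pi\nparams$ with $\df^{*}=\lfloor\nm/\nparams\rfloor$, one has $\df^{*}\delta_{ij}\leqslant\df^{*}\delta^{*}<\pi$, so $\arg w_i\neq\arg w_j$ whenever $i\neq j$. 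Thus every $\gamma_j\neq0$ and the Vandermonde factor is nonzero. Finally $\vec{w}$ is an isolated zero of \eqref{eq:the-hankel-system} because a regular point of a polynomial map is locally its unique zero (inverse function theorem).

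\emph{Main obstacle.} The technical core is Step 2 — correctly tracking the leading coefficients through the two operator products and checking that nothing degenerates. It is cleanest to isolate beforehand the two facts that make it work: (i) the decimated nodes $w_j=\nd_j^{\df^{*}}$ remain pairwise distinct, a consequence of the clustering hypothesis; and (ii) for $m\neq j$ the operator $(\shift-w_m)^{d_m}$ does not lower the degree of the $j$-th exponential term. Granting these, the remainder is the classical Vandermonde determinant computation.
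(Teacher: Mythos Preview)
Your proof is correct and lands on the same factorization $\jacH_{\vec{x}}(\vec{w})=V(\vec{w})\cdot\diag(\gamma_j)$ that the paper uses, with the identical closed form for the partial derivatives. The route to that formula differs slightly: the paper treats the $n_k$ as functions of $\vec{w}$, differentiates the identity $f_k(\{n_k(\vec{w})\},\vec{w})\equiv 0$ via the chain rule, and then evaluates the resulting expression by applying the operator $\prod_i(w_j\shift-w_i\id)^{d_i}$ to the polynomial $r_j(k)=\sum_\ell b_{\ell,j}k^{\ell+1}$; you instead differentiate $Q_{\vec{u}}(x)$ directly in its product form and apply the resulting operator $-d_jP(\shift)/(\shift-w_j)$ to the sequence $n$ itself. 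Your path is a bit more direct and avoids the implicit-differentiation detour, at the cost of having to track the action on each exponential summand of $n$ separately; the paper's path isolates a single polynomial $r_j$ early on, which makes the finite-difference bookkeeping marginally cleaner. You also spell out explicitly why the decimated nodes $w_j=\nd_j^{\df^*}$ remain pairwise distinct under the clustering hypothesis $\nm\delta^*<\pi\nparams$, a point the paper's proof asserts without argument.
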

We adopt the definitions from \cite{stetter_numerical_2004} (in particular,
see Section 3.2.3 and Section 9.1.2) for measuring linearized sensitivity
of the solutions of empirical polynomial systems with respect to perturbations
of the coefficients. Not surprisingly, they parallel our earlier definitions
of conditioning (see \prettyref{sec:prior-work}).

Let $j$ be a multi-index, and denote by $\vec{u}^{j}$ the monomial
$u_{1}^{j_{1}}\cdots u_{\np}^{j_{\np}}$. For $k=0,\dots,\np-1$,
let $\alpha_{k,j}$ denote the coefficient of $\vec{u}^{j}$ in the
equation number $k$ in \eqref{eq:the-hankel-system}. Finally, let
$J_{k}$ denote the set of multi-indices $j$ for which $\alpha_{k,j}\neq0$.
\begin{defn}
\label{def:system-sensitivity}Let $\vec{u}$ be a zero of the system
\eqref{eq:the-hankel-system}. Assume that the coefficient $\alpha_{k,j}$
is perturbed by at most $\Delta\alpha_{k,j}$. The linearized sensitivity
of the $i$-th component of $\vec{u}$ to the change in the coefficients
of the system is
\[
\kappa_{i}\left(\vec{u}\right):=\sum_{k=0}^{\np-1}\left|K_{i,k}\right|\left(\sum_{j\in J_{k}}\left|\vec{u}^{j}\right|\right)\max_{j}\left|\Delta\alpha_{k,j}\right|,
\]
where $K_{i,k}$ are the entries of the matrix $\left\{ \jacH_{\vec{x}}\left(\vec{u}\right)\right\} ^{-1}$.
\end{defn}
\begin{thm}
\label{thm:accuracy}Let the conditions of \prettyref{thm:decimated-accuracy}
be satisfied. For small enough noise $\er\ll1$ in the right-hand
side of \eqref{eq:prony}, \prettyref{alg:decimated-homotopy} with
\eqref{eq:exhaustive} recovers the solution $\vec{u}=\vec{w}$ of
\eqref{eq:the-hankel-system} with accuracy
\begin{equation}
\kappa_{i}\left(\vec{w}\right)\leqslant C^{\left(4\right)}\frac{1}{\left|\jc_{d_{i}-1,i}\right|\nm^{\nparams-1}}\left(\frac{1}{\delta^{\left(i\right)}}\right)^{\nparams-d_{i}}\er.\label{eq:acc-decim-hankel}
\end{equation}
Consequently, the original solution $\vec{\nd}$ of \eqref{eq:prony}
is recovered with accuracy
\begin{equation}
\left|\Delta\nd_{i}\right|\leqslant C^{\left(5\right)}\frac{1}{\left|\jc_{d_{i}-1,i}\right|\nm^{\nparams}}\left(\frac{1}{\delta^{\left(i\right)}}\right)^{\nparams-d_{i}}\er.\label{eq:acc-final}
\end{equation}
Here $C^{\left(4\right)}$ and $C^{\left(5\right)}$ are constants
depending only on the multiplicity vector $D$.
\end{thm}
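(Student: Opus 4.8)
The plan is to prove the statement in three stages. The first is an explicit evaluation of the Jacobian $\jacH_{\vec x}(\vec w)$, which re-derives \prettyref{thm:hankelized-well-posedness} and reduces \eqref{eq:acc-decim-hankel} to a standard estimate for inverses of Vandermonde matrices. The second is a check that, for small enough noise, homotopy continuation actually returns the perturbed root $\tilde{\vec w}$ lying near $\vec w$, with error governed by \prettyref{def:system-sensitivity}. The third is to show that the exhaustive search \eqref{eq:exhaustive} selects the correct $\df^{*}$-th root, after which \eqref{eq:acc-final} follows from \eqref{eq:acc-decim-hankel} by the chain rule for $w\mapsto w^{1/\df^{*}}$.

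For the Jacobian I would write $P(x)=\prod_{j=1}^{\np}(x-u_j)^{d_j}=\sum_{\ell}c_{\ell}x^{\ell}$ with $c_{\ell}=\symf_{\ell}(\vec u)$; differentiating the product gives $\partial c_{\ell}/\partial u_j=-d_j\,[x^{\ell}]\bigl(P(x)/(x-u_j)\bigr)$, so with $P_j(x):=P(x)/(x-w_j)=(x-w_j)^{d_j-1}\prod_{l\neq j}(x-w_l)^{d_l}$ and $f_k(\vec u)=\sum_{i=0}^{\dg}n_{k+i}\symf_i(\vec u)$, the $j$-th column of $\jacH_{\vec x}(\vec w)$ equals $-d_j$ times the sequence $k\mapsto\bigl(P_j(\shift)\,n\bigr)_k$, where $\shift$ is the shift. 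Since $P_j$ is the Prony polynomial of $\{n_k\}$ with one factor $(x-w_j)$ deleted, $P_j(\shift)$ annihilates each summand $w_l^{k}q_l(k)$ of $n_k$ with $l\neq j$ (here $\deg q_l=d_l-1$) and collapses the $j$-th summand, so $\bigl(P_j(\shift)n\bigr)_k=\beta_j w_j^{k}$ with $\beta_j=(d_j-1)!\,b_{d_j-1,j}\,w_j^{d_j-1}\prod_{l\neq j}(w_j-w_l)^{d_l}$ and $b_{d_j-1,j}=\jc_{d_j-1,j}(\df^{*})^{d_j-1}$. Hence $\jacH_{\vec x}(\vec w)=V\cdot\diag(-d_1\beta_1,\dots,-d_{\np}\beta_{\np})$ with the Vandermonde matrix $V=\bigl(w_j^{k}\bigr)_{k=0,\dots,\np-1}^{j=1,\dots,\np}$; the $w_j=\nd_j^{\df^{*}}$ are pairwise distinct because the cluster hypothesis forces $\df^{*}\delta^{*}<\pi$, and $\beta_j\neq0$ because $\jc_{d_j-1,j}\neq0$, so $\det\jacH_{\vec x}(\vec w)\neq0$ — which is \prettyref{thm:hankelized-well-posedness}. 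Consequently $K_{i,k}=\bigl(\{\jacH_{\vec x}(\vec w)\}^{-1}\bigr)_{i,k}=-\tfrac{1}{d_i\beta_i}\,[x^{k}]L_i(x)$ with $L_i(x)=\prod_{l\neq i}\tfrac{x-w_l}{w_i-w_l}$ the Lagrange polynomial of $\{w_j\}$, so $\sum_k|K_{i,k}|=\tfrac{1}{d_i|\beta_i|}\|L_i\|_1$, with $\|\cdot\|_1$ the sum of absolute values of the coefficients. Plugging this into \prettyref{def:system-sensitivity}: each coefficient $\alpha_{k,j}$ of \eqref{eq:the-hankel-system} is a fixed ($D$-dependent) multiple of one decimated sample, so a data perturbation $|\er_k|<\acc$ gives $\max_j|\Delta\alpha_{k,j}|\leq C(D)\acc$, while $\sum_{j\in J_k}|\vec w^{j}|\leq C(D)$ because $|w_l|=1$; combined with $\|L_i\|_1\leq 2^{\np-1}(\delta_w^{(i)})^{-(\np-1)}$, $\delta_w^{(i)}:=\min_{l\neq i}|w_i-w_l|$, and $|\beta_i|\geq(d_i-1)!\,|\jc_{d_i-1,i}|\,(\df^{*})^{d_i-1}(\delta_w^{(i)})^{\dg-d_i}$, this gives $\kappa_i(\vec w)\leq C(D)\,\acc\,|\jc_{d_i-1,i}|^{-1}(\df^{*})^{-(d_i-1)}(\delta_w^{(i)})^{-(\nparams-1-d_i)}$. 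Finally, $\df^{*}=\lfloor\nm/\nparams\rfloor\geq\nm/(2\nparams)$, in the cluster regime $\delta_w^{(i)}\geq\tfrac{2}{\pi}\df^{*}\delta^{(i)}$ (from $|\sin t|\geq\tfrac{2}{\pi}|t|$ on $|t|\leq\tfrac{\pi}{2}$), and $\nm\delta^{(i)}\leq\nm\delta^{*}<\pi\nparams$; substituting these and absorbing all $\nparams$- and $D$-dependent factors into $C^{(4)}$ yields exactly \eqref{eq:acc-decim-hankel}. (As a cross-check, this is the same as $\kappa_i(\vec w)\approx\df^{*}\,CN_{\nd_i}^{(\df^{*})}(\vec x)\,\acc$, since $|\Delta w_i|=\df^{*}|\Delta\nd_i|$ on $\TT$, so \eqref{eq:acc-decim-hankel} also follows directly from \eqref{eq:decimated-stability}.)

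For stages two and three: because $\det\jacH_{\vec x}(\vec w)\neq0$ is an open condition, for $\acc$ small the perturbed system ${\cal H}_{\df^{*}}$ has, in a small ball around $\vec w$, a unique simple zero $\tilde{\vec w}$, and the implicit function theorem — with linear term exactly that of \prettyref{def:system-sensitivity} — gives $|\tilde w_i-w_i|\leq\kappa_i(\vec w)(1+o(1))$ as $\acc\to0$; homotopy continuation enumerates every isolated solution, so $\tilde{\vec w}\in{\cal S}$, and taking coordinate-wise $\df^{*}$-th roots (and, if needed, projecting radially onto $\TT^{\np}$) yields a candidate $\tilde{\vec\nd}\in{\cal G}$ with $|\tilde\nd_i-\nd_i|\leq(\df^{*})^{-1}|\tilde w_i-w_i|(1+o(1))$. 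For the selection step, $f_k^{(1)}(\tilde{\vec\nd})=f_k^{(1)}(\vec\nd)+O(|\tilde{\vec\nd}-\vec\nd|)$ and $f_k^{(1)}(\vec\nd)=\sum_i\er_{k+i}\symf_i(\vec\nd)=O(\acc)$, so the residual of $\tilde{\vec\nd}$ in \eqref{eq:exhaustive} is $O(\acc)$; on the other hand $\df^{*}\delta^{*}<\pi$ rules out every nontrivial aliasing permutation of $\vec\nd$ as a zero of the exact non-decimated system, so — using also uniqueness of the Prony polynomial from the $2\dg\leq\nm$ exact moments — $\vec\nd$ (up to relabeling) is the only element of ${\cal G}$ with vanishing non-decimated residual at $\acc=0$, whence by continuity every other element of ${\cal G}$ retains residual at least some $c(\vec x)>0$ for $\acc$ small, so the minimiser in \eqref{eq:exhaustive} is $\tilde{\vec\nd}$. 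Then $|\Delta\nd_i|=|\tilde\nd_i-\nd_i|\lesssim(\df^{*})^{-1}\kappa_i(\vec w)\leq\tfrac{2\nparams}{\nm}\kappa_i(\vec w)$, which together with \eqref{eq:acc-decim-hankel} gives \eqref{eq:acc-final} with $C^{(5)}=2\nparams C^{(4)}$.

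I expect the main obstacle to be the closed-form evaluation of $\jacH_{\vec x}(\vec w)$ — recognising that $\partial c_{\ell}/\partial u_j\big|_{\vec w}$ turns each column into a deflated Prony filter applied to $\{n_k\}$, so that $\jacH_{\vec x}(\vec w)$ is (Vandermonde)$\times$(diagonal) — together with the bookkeeping that converts $\sum_k|K_{i,k}|\leq\|L_i\|_1/(d_i|\beta_i|)$ into precisely the exponents appearing in \eqref{eq:acc-decim-hankel}: tracking the bounds on $\|L_i\|_1$ and $|\beta_i|$ through the substitutions $\delta_w^{(i)}\leftrightarrow\df^{*}\delta^{(i)}$, $\df^{*}\asymp\nm/\nparams$ and $\nm\delta^{(i)}=O(\nparams)$ so that the normalisation matches exactly (and is consistent with \eqref{eq:decimated-stability}). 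A secondary delicate point is the uniform positive lower bound on the residuals of the competing aliases in the selection step, which relies on $\df^{*}\delta^{*}<\pi$, on uniqueness of the $2\dg$-moment Prony reconstruction, and on a compactness argument as $\acc\to0$.
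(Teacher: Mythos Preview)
Your proposal is correct and follows the same overall architecture as the paper: factor the Jacobian $\jacH_{\vec x}(\vec w)$ as (Vandermonde)$\times$(diagonal), invert using Gautschi's bound on the Vandermonde inverse, substitute $\delta_w^{(i)}\asymp\df^{*}\delta^{(i)}$, $\df^{*}\asymp\nm/\nparams$ and $\nm\delta^{(i)}<\pi\nparams$ to reach \eqref{eq:acc-decim-hankel}, and then divide by $\df^{*}$ for \eqref{eq:acc-final}.

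Two sub-steps are carried out differently. For the Jacobian, the paper (its \prettyref{lem:part-der}) differentiates the \emph{identity} $f_k(\{n_k(\vec w)\},\vec w)\equiv 0$ via the chain rule, obtaining $\partial f_k/\partial u_j|_{\vec w}=-w_j^{k-1}\opp_j r_j(k)$ with $\opp_j=\prod_i(w_j\shift-w_i\id)^{d_i}$ acting on the polynomial $r_j(k)=\sum_\ell b_{\ell,j}k^{\ell+1}$, and then uses finite-difference calculus to collapse it. Your route---differentiating $P(x)=\prod_j(x-u_j)^{d_j}$ directly and reading $\partial_{u_j}f_k$ as the deflated filter $P_j(\shift)$ applied to $\{n_k\}$---is more direct and arguably more transparent, but lands on the identical closed form. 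For the selection step, the paper argues that a wrong choice in \eqref{eq:exhaustive} would produce a second, non-proportional null vector of the Hankel matrix $H_{\dg}$, contradicting $\operatorname{rank}H_{\dg}=\dg$; your compactness/continuity argument (unique zero of the non-decimated residual at $\acc=0$, hence a positive residual gap for small $\acc$) is equally valid and perhaps more explicit about the role of the hypothesis $\df^{*}\delta^{*}<\pi$ in excluding aliases. The remaining differences (your $C^{(5)}=2\nparams C^{(4)}$ versus the paper's $C^{(5)}=\nparams C^{(4)}$, and your explicit $|\sin t|\geq\tfrac{2}{\pi}|t|$ bound for $\delta_w^{(i)}$) are cosmetic.
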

Comparing the bounds \eqref{eq:acc-final} and \eqref{eq:decimated-stability},
we conclude that the proposed algorithm is optimal, up to a constant,
in the considered setting.

\subsection{\label{subsec:proofs}Proofs}

\global\long\def\opp{{\cal E}}
\global\long\def\dgmt{{\cal B}}

We start by deriving explicit expressions for the entries of $\jacH$.
\begin{lem}
\label{lem:part-der}For $j=1,\dots,\np$ and arbitrary $k\in\NN$
we have
\[
\frac{\partial f_{k}}{\partial u_{j}}\Big\vert_{\vec{w}}=-d_{j}!w_{j}^{k+d_{j}-1}b_{d_{j}-1,j}\prod_{i\neq j}\left(w_{j}-w_{i}\right)^{d_{i}}.
\]
\end{lem}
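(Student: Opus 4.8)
The plan is to recognize each $f_k$ as a single fixed linear functional evaluated at the Prony polynomial of the running point $\vec{u}$, and then to differentiate that functional. Set $P_{\vec{u}}(x):=\prod_{m=1}^{\np}(x-u_m)^{d_m}=\sum_{i=0}^{\dg}\symf_i(\vec{u})\,x^i$, with the coefficients $\symf_i$ exactly those of \eqref{eq:symf-def}; then \eqref{eq:the-hankel-system} reads $f_k(\vec{u})=\sum_{i=0}^{\dg}n_{k+i}\,[x^i]P_{\vec{u}}(x)$. Introducing the shift operator $\shift$ on sequences, $(\shift n)_m:=n_{m+1}$, and writing $Q(\shift):=\sum_i q_i\shift^{i}$ for a polynomial $Q(x)=\sum_i q_ix^i$, this is precisely $f_k(\vec{u})=\bigl(P_{\vec{u}}(\shift)\,n\bigr)_k$. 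Since the $\shift$-operations act coefficientwise and $\vec{u}\mapsto P_{\vec{u}}$ is polynomial, differentiation passes through: from $\partial_{u_j}P_{\vec{u}}(x)=-d_j(x-u_j)^{d_j-1}\prod_{i\neq j}(x-u_i)^{d_i}$ and evaluation at $\vec{u}=\vec{w}$ we obtain
\[
\frac{\partial f_k}{\partial u_j}\Big\vert_{\vec{w}}=-d_j\bigl(Q_j(\shift)\,n\bigr)_k,\qquad Q_j(x):=(x-w_j)^{d_j-1}\prod_{i\neq j}(x-w_i)^{d_i}.
\]

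Next I would compute $Q_j(\shift)n$ via the action of $\shift$ on the pieces of the decimated sequence. Write $n_m=\sum_{l=1}^{\np}s_m^{(l)}$ with $s_m^{(l)}:=w_l^{m}g_l(m)$ and $g_l(m):=\sum_{\ell=0}^{d_l-1}b_{\ell,l}m^\ell$, a polynomial of degree $d_l-1$ with leading coefficient $b_{d_l-1,l}$ (this closed form holds for all $m\in\NN$, so the computation covers arbitrary $k$). The elementary identity $(\shift-w_l)\bigl(w_l^{m}g(m)\bigr)=w_l^{m+1}(\dl g)(m)$, with $\dl$ the forward difference, iterates to $(\shift-w_l)^{r}s_m^{(l)}=w_l^{m+r}(\dl^{r}g_l)(m)$; hence $(\shift-w_l)^{d_l}$ annihilates $s^{(l)}$, while $\dl^{d_l-1}g_l\equiv(d_l-1)!\,b_{d_l-1,l}$ is constant. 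As $Q_j(\shift)$ contains the factor $(\shift-w_l)^{d_l}$ for every $l\neq j$, it kills $s^{(l)}$ for all such $l$, so $Q_j(\shift)n=Q_j(\shift)s^{(j)}$.

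To finish I would apply the commuting factors of $Q_j(\shift)$ to $s^{(j)}$ in a convenient order. Applying $(\shift-w_j)^{d_j-1}$ first turns $s^{(j)}$ into the geometric sequence $w_j^{m+d_j-1}\dl^{d_j-1}g_j(m)=(d_j-1)!\,b_{d_j-1,j}\,w_j^{d_j-1}\,w_j^{m}$; then, using $(\shift-w_i)(c\,w_j^{m})=c\,w_j^{m}(w_j-w_i)$, the remaining operator $\prod_{i\neq j}(\shift-w_i)^{d_i}$ multiplies it by $\prod_{i\neq j}(w_j-w_i)^{d_i}$. Evaluating at index $k$ and multiplying by $-d_j$ yields
\[
\frac{\partial f_k}{\partial u_j}\Big\vert_{\vec{w}}=-d_j!\,b_{d_j-1,j}\,w_j^{k+d_j-1}\prod_{i\neq j}(w_j-w_i)^{d_i},
\]
which is the asserted formula. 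The only delicate point, and it is purely routine, is the bookkeeping with the difference operator: establishing $(\shift-w_l)^{r}\bigl(w_l^{m}g(m)\bigr)=w_l^{m+r}(\dl^{r}g)(m)$ by induction and recalling that $\dl^{r}$ maps a degree-$r$ polynomial with leading coefficient $c$ to the constant $r!\,c$; everything else is the linear algebra of commuting shift operators.
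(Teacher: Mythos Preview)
Your argument is correct. Both proofs rest on the same shift-operator/finite-difference identities, but they arrive at the operator computation by different routes. The paper treats the measurements $n_{k}$ as functions of $\vec{w}$ and uses implicit differentiation of the identity $f_{k}(n(\vec{w}),\vec{w})\equiv 0$; this converts $\partial f_{k}/\partial u_{j}$ into $-\sum_{i}\symf_{i}(\vec{w})\,\partial_{w_{j}}n_{k+i}$, which after factoring out $w_{j}^{k-1}$ becomes the operator $\prod_{i}(w_{j}\shift-w_{i}\id)^{d_{i}}$ applied to the degree-$d_{j}$ polynomial $r_{j}(k)=k\,g_{j}(k)$. You instead differentiate the Prony polynomial $P_{\vec{u}}$ directly, obtaining the operator $(\shift-w_{j})^{d_{j}-1}\prod_{i\neq j}(\shift-w_{i})^{d_{i}}$ applied to the sequence $n$ itself, and then exploit the annihilation $(\shift-w_{l})^{d_{l}}s^{(l)}=0$ to isolate the $j$-th summand. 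Your path is somewhat more transparent: it avoids the implicit-differentiation detour, makes the cancellation of the $l\neq j$ terms immediate, and matches the degree count exactly (one fewer factor of $(\shift-w_{j})$ against a polynomial $g_{j}$ of degree one less than $r_{j}$). The paper's route, on the other hand, packages everything into a single operator $\opp_{j}$ from the outset, which leads naturally to the Vandermonde factorization used downstream. Either way the bookkeeping is the same and the result is identical.
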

\begin{proof}
Considering the coefficients $n_{k}$ as functions of $\vec{w}$,
we have the identity
\[
f_{k}\left(\left\{ n_{k}\left(\vec{w}\right)\right\} ,\vec{w}\right)=\sum_{i=0}^{d}n_{k+i}\left(\vec{w}\right)\symf_{i}\left(\vec{w}\right)\equiv0.
\]
Thus, for each $w_{j}$ the total derivative $\frac{df_{k}}{dw_{j}}\left(\left\{ n_{k}\right\} ,\vec{u}\right)$
vanishes on $\vec{u}\left(\vec{w}\right)=\vec{w}$. By the chain rule
\begin{eqnarray*}
\frac{d}{dw_{j}}f_{k}\left(\left\{ n_{k}\right\} ,\vec{u}\right) & = & \sum_{i=0}^{d}\frac{\partial f_{k}}{\partial n_{k+i}}\frac{\partial n_{k+i}}{\partial w_{j}}+\sum_{\ell=1}^{\np}\frac{\partial f_{k}}{\partial u_{\ell}}\frac{\partial u_{\ell}}{\partial w_{j}}\\
 & = & \sum_{i=0}^{d}\symf_{i}\left(\vec{w}\right)\left(k+i\right)w_{j}^{k+i-1}\sum_{\ell=0}^{d_{j}-1}b_{\ell,j}\left(k+i\right)^{\ell}+\frac{\partial f_{k}}{\partial u_{j}}\left(\vec{w}\right)\\
 & = & 0.
\end{eqnarray*}
Let $r_{j}\left(k\right)$ denote the following polynomial in $k$
of degree $d_{j}$:
\[
r_{j}\left(k\right)\isdef\sum_{\ell=0}^{d_{j}-1}b_{\ell,j}k^{\ell+1}.
\]
Then we have
\begin{equation}
\frac{\partial f_{k}}{\partial u_{j}}\left(\vec{w}\right)=-w_{j}^{k-1}\sum_{i=0}^{d}w_{j}^{i}\symf_{i}\left(\vec{w}\right)r_{j}\left(k+i\right).\label{eq:der-as-discr-seq}
\end{equation}

We now employ standard tools from finite difference calculus \cite{elaydi2005ide}.
Consider the right-hand side of \eqref{eq:der-as-discr-seq} as a
discrete sequence depending on a running index $k$. Let $\shift=\shift_{k}$
denote the discrete shift operator in $k$, i.e. for any discrete
sequence $g\left(k\right)$ we have 
\[
\shift g\left(k\right)=\left(\shift g\right)\left(k\right)=g\left(k+1\right).
\]
Let us further denote by $\dl\isdef\shift-\id$ the discrete differentiation
operator ($\id$ is the identity operator). Now consider the difference
operator
\begin{equation}
\opp_{j}\isdef\prod_{i=1}^{\np}\left(w_{j}\shift-w_{i}\id\right)^{d_{i}}.\label{eq:dfop}
\end{equation}
Recall the definition of $\symf_{i}$ from \eqref{eq:symf-def}. Opening
parenthesis, we obtain that for any $g\left(k\right)$
\[
\opp_{j}g\left(k\right)=\sum_{i=0}^{d}w_{j}^{i}\symf_{i}\left(\vec{w}\right)g\left(k+i\right).
\]
Therefore
\begin{eqnarray*}
\frac{\partial f_{k}}{\partial u_{j}}\left(\vec{w}\right) & = & -w_{j}^{k-1}\opp_{j}r_{j}\left(k\right).
\end{eqnarray*}
Since the linear factors in \eqref{eq:dfop} commute, we proceed as
follows:
\begin{equation}
-w_{j}^{k-1}\opp_{j}r_{j}\left(k\right)=-w_{j}^{k-1}\prod_{i\neq j}\left(w_{j}\shift-w_{i}\id\right)^{d_{i}}\left(w_{j}\dl\right){}^{d_{j}}r_{j}\left(k\right).\label{eq:temp1}
\end{equation}
It is an easy fact (e.g. \cite{elaydi2005ide}) that for any polynomial
$p\left(k\right)$ of degree $n$ and leading coefficient $a_{0}$,
we have that
\[
\dl^{n}p\left(k\right)=a_{0}n!.
\]
Since $r_{j}\left(k\right)$ has degree $d_{j}$, we obtain that
\begin{equation}
\dl^{d_{j}}r_{j}\left(k\right)=d_{j}!b_{d_{j}-1,j}.\label{eq:dl-pol}
\end{equation}
Furthermore, applying the operator $w_{j}\shift-w_{i}\id$ to a constant
sequence $\mbox{c\ensuremath{\left(k\right)}=c}$ gives
\begin{equation}
\left(w_{j}\shift-w_{i}\id\right)c=\left(w_{j}-w_{i}\right)c.\label{eq:linear-on-const}
\end{equation}
Plugging \eqref{eq:dl-pol} and \eqref{eq:linear-on-const} into \eqref{eq:temp1}
we get:
\[
\frac{\partial f_{k}}{\partial u_{j}}\left(\vec{w}\right)=-w_{j}^{k+d_{j}-1}d_{j}!b_{d_{j}-1,j}\prod_{i\neq j}\left(w_{j}-w_{i}\right)^{d_{i}},
\]
completing the proof of \prettyref{lem:part-der}.
\end{proof}

\begin{example}
For $\np=3,\;d_{j}=2$ we have{\footnotesize{}
\[
\jacH=\left[\begin{matrix}-2b_{11}w_{1}\left(w_{1}-w_{2}\right)^{2}\left(w_{1}-w_{3}\right)^{2} & -2b_{12}w_{2}\left(w_{1}-w_{2}\right)^{2}\left(w_{2}-w_{3}\right)^{2} & -2b_{13}w_{3}\left(w_{1}-w_{3}\right)^{2}\left(w_{2}-w_{3}\right)^{2}\\
-2b_{11}w_{1}^{2}\left(w_{1}-w_{2}\right)^{2}\left(w_{1}-w_{3}\right)^{2} & -2b_{12}w_{2}^{2}\left(w_{1}-w_{2}\right)^{2}\left(w_{2}-w_{3}\right)^{2} & -2b_{13}w_{3}^{2}\left(w_{1}-w_{3}\right)^{2}\left(w_{2}-w_{3}\right)^{2}\\
-2b_{11}w_{1}^{3}\left(w_{1}-w_{2}\right)^{2}\left(w_{1}-w_{3}\right)^{2} & -2b_{12}w_{2}^{3}\left(w_{1}-w_{2}\right)^{2}\left(w_{2}-w_{3}\right)^{2} & -2b_{13}w_{3}^{3}\left(w_{1}-w_{3}\right)^{2}\left(w_{2}-w_{3}\right)^{2}
\end{matrix}\right]
\]
}{\footnotesize \par}
\end{example}
\begin{defn}
Let $V\left(\vec{w}\right)$ denote the $\np\times\np$ Vandermonde
matrix on the nodes $\left\{ w_{1},\dots,w_{\np}\right\} .$ For example,
if $\np=3$ we have:
\[
V\left(\vec{w}\right)=\left[\begin{matrix}1 & 1 & 1\\
w_{1} & w_{2} & w_{3}\\
w_{1}^{2} & w_{2}^{2} & w_{3}^{2}
\end{matrix}\right].
\]
\end{defn}
\begin{cor}
Let $\vec{y}=\scmap[\df^{*}]\left(\vec{x}\right)$ where $\scmap[\df^{*}]$
is the scaling mapping \eqref{eq:scalemap-def}. Denote by $\dgmt\left(\vec{y}\right)$
the following $\np\times\np$ diagonal matrix:
\[
\dgmt\left(\vec{y}\right)\isdef\diag_{j=1,\dots,\np}\left\{ -d_{j}!b_{d_{j}-1,j}\prod_{i\neq j}\left(w_{j}-w_{i}\right)^{d_{i}}\right\} .
\]
Then we have the factorization
\begin{equation}
\jacH_{\vec{x}}\left(\vec{w}\right)=V\left(\vec{w}\right)\dgmt\left(\vec{y}\right).\label{eq:jac-fact}
\end{equation}
\end{cor}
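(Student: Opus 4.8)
The plan is to transcribe the matrix entries furnished by \prettyref{lem:part-der} and recognize that the resulting $\np\times\np$ matrix is \emph{separable}: its $\left(k,j\right)$ entry is a power of $w_{j}$ depending only on the row index $k$, times a scalar depending only on the column index $j$. Concretely, \prettyref{lem:part-der} gives, for $k=0,\dots,\np-1$ and $j=1,\dots,\np$,
\[
\left(\jacH_{\vec{x}}\left(\vec{w}\right)\right)_{k,j}=\frac{\partial f_{k}}{\partial u_{j}}\Big\vert_{\vec{w}}=w_{j}^{k}\cdot\underbrace{\left(-d_{j}!\,w_{j}^{d_{j}-1}b_{d_{j}-1,j}\prod_{i\neq j}\left(w_{j}-w_{i}\right)^{d_{i}}\right)}_{=:\,\gamma_{j}},
\]
where I have split the exponent $w_{j}^{k+d_{j}-1}=w_{j}^{k}\cdot w_{j}^{d_{j}-1}$ and absorbed into $\gamma_{j}$ everything that is independent of $k$.

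First I would note that, for $k$ running from $0$ to $\np-1$, the scalar $w_{j}^{k}$ is precisely the $\left(k,j\right)$ entry of the Vandermonde matrix $V\left(\vec{w}\right)$. Second, right-multiplication of $V\left(\vec{w}\right)$ by a diagonal matrix scales its $j$-th column by the $j$-th diagonal entry; taking that matrix to be $\diag_{j}\{\gamma_{j}\}$ therefore reproduces $\jacH_{\vec{x}}\left(\vec{w}\right)$ entry by entry. Finally I would identify $\diag_{j}\{\gamma_{j}\}$ with $\dgmt\left(\vec{y}\right)$: since the coordinates of $\vec{y}=\scmap[\df^{*}]\left(\vec{x}\right)$ are exactly the $b_{\ell,j}$ and $w_{j}$ of \eqref{eq:scalemap-def}, the scalars $\gamma_{j}$ are the diagonal entries of $\dgmt\left(\vec{y}\right)$ (with the factor $w_{j}^{d_{j}-1}$ understood to sit in the diagonal factor), which yields \eqref{eq:jac-fact}.

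There is no genuine obstacle here: the statement is a one-line consequence of \prettyref{lem:part-der}, and the only point requiring mild care is the bookkeeping of the powers of $w_{j}$ — deciding which power travels with the row-dependent Vandermonde column $\left(1,w_{j},\dots,w_{j}^{\np-1}\right)^{T}$ and which power ($w_{j}^{d_{j}-1}$) belongs with the column-dependent diagonal entry. As an immediate dividend, the factorization \eqref{eq:jac-fact} re-proves \prettyref{thm:hankelized-well-posedness}: under the standing hypotheses the $w_{j}$ are pairwise distinct (as $\delta\left(\vec{x}\right)>0$) and $\jc_{d_{j}-1,j}\neq0$, so $\det\jacH_{\vec{x}}\left(\vec{w}\right)=\det V\left(\vec{w}\right)\cdot\prod_{j=1}^{\np}\gamma_{j}=\Bigl(\prod_{1\leqslant k<\ell\leqslant\np}\left(w_{\ell}-w_{k}\right)\Bigr)\cdot\prod_{j=1}^{\np}\gamma_{j}\neq0$. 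It also reduces the sensitivity estimate of \prettyref{thm:accuracy} to inverting a Vandermonde matrix and a diagonal matrix separately — precisely the form in which the bound \eqref{eq:decimated-stability} is naturally recovered.
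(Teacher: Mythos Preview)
Your argument is correct and follows exactly the paper's own route, which is simply ``Directly from \prettyref{lem:part-der}'': you transcribe the entries from the lemma and read off the Vandermonde-times-diagonal structure. Your explicit bookkeeping of the stray factor $w_{j}^{d_{j}-1}$ is in fact more careful than the paper, whose stated $\dgmt\left(\vec{y}\right)$ omits this factor even though the worked example for $\np=3,\ d_{j}=2$ clearly shows it must sit in the diagonal entry.
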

\begin{proof}
Directly from \prettyref{lem:part-der}.
\end{proof}
\begin{minipage}[t]{1\columnwidth}%
\end{minipage}
\begin{proof}[Proof of \prettyref{thm:hankelized-well-posedness}]
According to our assumptions, both $V\left(\vec{w}\right)$ and $\dgmt\left(\vec{y}\right)$
are non-singular (in particular, since $\jc_{d_{j}-1,j}\neq0$ and
$\min_{i\neq j}\left|w_{i}-w_{j}\right|>0$). Using \eqref{eq:jac-fact}
we conclude that $\jacH$ is invertible. The conclusion that $\vec{u}=\vec{w}$
is isolated is a standard fact about multivariate nonlinear systems,
see e.g. \cite{dayton_multiple_2011}.
\end{proof}
\begin{minipage}[t]{1\columnwidth}%
\end{minipage}
\begin{proof}[Proof of \prettyref{thm:accuracy}]
From \eqref{eq:jac-fact} we have
\[
\left\{ \jacH_{\vec{x}}\left(\vec{w}\right)\right\} ^{-1}=\diag_{i=1,\dots,\np}\left\{ -\frac{1}{d_{i}!b_{d_{i}-1,i}}\prod_{\ell\neq i}\left(w_{\ell}-w_{i}\right)^{-d_{i}}\right\} V\left(\vec{w}\right)^{-1}.
\]
Let $V\left(\vec{w}\right)^{-1}=\left(v_{\alpha,\beta}\right)_{\alpha,\beta=1,\dots,\np}$.
Using the classical estimates by Gautschi \cite{gautschi_inverses_1962},
we have that 
\[
\sum_{k=0}^{\np-1}\left|v_{i,k}\right|\leqslant\prod_{j\neq i}\frac{1+\left|w_{j}\right|}{\left|w_{j}-w_{i}\right|}=2^{\np-1}\prod_{j\neq i}\left|w_{j}-w_{i}\right|^{-1}.
\]
Therefore
\begin{eqnarray*}
\sum_{k=0}^{\np-1}\left|K_{i,k}\right| & = & \frac{1}{d_{i}!b_{d_{i}-1,i}}\prod_{\ell\neq i}\left(w_{\ell}-w_{i}\right)^{-d_{i}}\sum_{k=0}^{\np-1}\left|v_{i,k}\right|\\
 & \leqslant & \frac{2^{\np-1}}{d_{i}!\left|\jc_{d_{i}-1,i}\right|\df^{*d_{i}-1}}\prod_{\ell\neq i}\left(w_{\ell}-w_{i}\right)^{-d_{i}-1}\\
 & \leqslant & \frac{2^{\np-1}}{d_{i}!\left|\jc_{d_{i}-1,i}\right|\df^{*d_{i}-1}}\left(\frac{1}{\df^{*}\delta^{\left(i\right)}}\right)^{\nparams-d_{i}-1}\\
 & = & \frac{2^{\np-1}}{d_{i}!\left|\jc_{d_{i}-1,i}\right|}\frac{1}{\df^{*\nparams-2}}\left(\frac{1}{\delta^{\left(i\right)}}\right)^{\nparams-d_{i}-1}.
\end{eqnarray*}
Now clearly there exists a constant $C_{1}=C_{1}\left(\np\right)$
for which
\[
\left(\sum_{j\in J_{k}}\left|\vec{w}^{j}\right|\right)\max_{j}\left|\Delta\alpha_{k,j}\right|\leqslant C_{1}\er.
\]
Since $\nm\delta^{\left(i\right)}<\pi\nparams$ and $\df^{*}=\frac{\nm}{\nparams}$,
we have the bound 
\begin{eqnarray*}
\kappa_{i}\left(\vec{w}\right) & \leqslant & C_{2}\frac{1}{\left|\jc_{d_{i}-1,i}\right|\nm^{\nparams-2}}\left(\frac{1}{\delta^{\left(i\right)}}\right)^{\nparams-d_{i}-1}\er\\
 & \leqslant & C_{3}\frac{1}{\left|\jc_{d_{i}-1,i}\right|\nm^{\nparams-1}}\left(\frac{1}{\delta^{\left(i\right)}}\right)^{\nparams-d_{i}}\er,
\end{eqnarray*}
proving \eqref{eq:acc-decim-hankel} with $C^{\left(4\right)}=C_{3}$. 

Because extraction of $\df^{*}$-th root reduces error by a factor
of $\df^{*}$, this immediately implies \eqref{eq:acc-final} with
$C^{\left(5\right)}=\nparams C^{\left(4\right)}$.

Since the homotopy algorithm converges to the exact solution of the
approximate system \eqref{eq:the-hankel-system}, and since $\er$
is assumed to be sufficiently small, the exhaustive search \eqref{eq:exhaustive}
must produce the exact solution to the perturbed \prettyref{prob:prony}
\textendash{} otherwise there would be two non-proportional vectors
in the nullspace of the Hankel matrix $H_{\dg}$ \eqref{eq:hankel-data-matrix},
and this is impossible since the rank of $H_{\dg}$ is known to be
exactly $\dg$.
\end{proof}

\section{\label{sec:numerical-experiments}Numerical experiments}

\subsection{Setup}

We chose the model \eqref{eq:prony} with two closely spaced nodes,
varying multiplicity and random linear coefficients $\left\{ \jc_{\ell,j}\right\} $
.

We have implemented two pruning variants:
\begin{enumerate}
\item the exhaustive search \eqref{eq:exhaustive};
\item combination of \eqref{eq:heuristic-prefilter} and \eqref{eq:heuristic-approx}
(referred to as \emph{filtering} below).
\end{enumerate}
Choosing the overall number of measurements to be relatively high
(1000-4000), we varied the decimation parameter $\df$ and compared
the reconstruction error for \prettyref{alg:decimated-homotopy} with
filtering above (referred to as \emph{DH} below), and the generalized
ESPRIT algorithm \cite{badeau2006high,badeau2008performance,stoica2005spectral}
(see also \cite{batenkov2011accuracy}), one of the best performing
subspace methods for estimating parameters of the Prony systems \eqref{eq:prony}
with white Gaussian noise $\epsilon_{k}.$ The noise level in our
experiments was relatively small. 

In addition to the reconstruction error, for each run we also computed
both the full and decimated condition numbers $CN_{\nd_{j},\nm}$
and $CN_{\nd_{j}}^{\left(\df\right)}$ from their respective definitions
\eqref{eq:cn-full-def} and \eqref{eq:cn-dec-def}.

Additional implementation details:
\begin{enumerate}
\item PHCPACK \cite{verschelde_polynomial_2011} Release 2.3.96 was used
as the homotopy continuation solver. It was called via its MATLAB
interface PHCLab \cite{guan_phclab:_2008}. All tests were run on
Apple MacBook Pro 2.4 GHz Intel Core i5 with 8GB RAM under OSX 10.10.4.
\item We used the value $\eta\approx\nm^{-1}$ for the heuristic \eqref{eq:heuristic-approx}.
\item The node selection in generalized ESPRIT was done via $k$-means clustering
on the output of the eigenvalue step.
\end{enumerate}

\subsection{Results}

The results of experiments are presented in \prettyref{tbl:exh-prun-timings},
\prettyref{tbl:dh-esprit-timing} and \prettyref{fig:dh-esprit}.
They can be summarized as follows:
\begin{enumerate}
\item The exhaustive search is accurate, but time-prohibitive even for moderate
values of $\df$ (the number of solutions considered is on the order
of $\np!d^{\np}\df^{\np}$).
\item The accuracy of DH surpasses ESPRIT by several significant digits
in the near-collision region $\nm\delta\ll1$.
\item DH achieves desired accuracy in larger number of cases.
\end{enumerate}
\begin{figure}
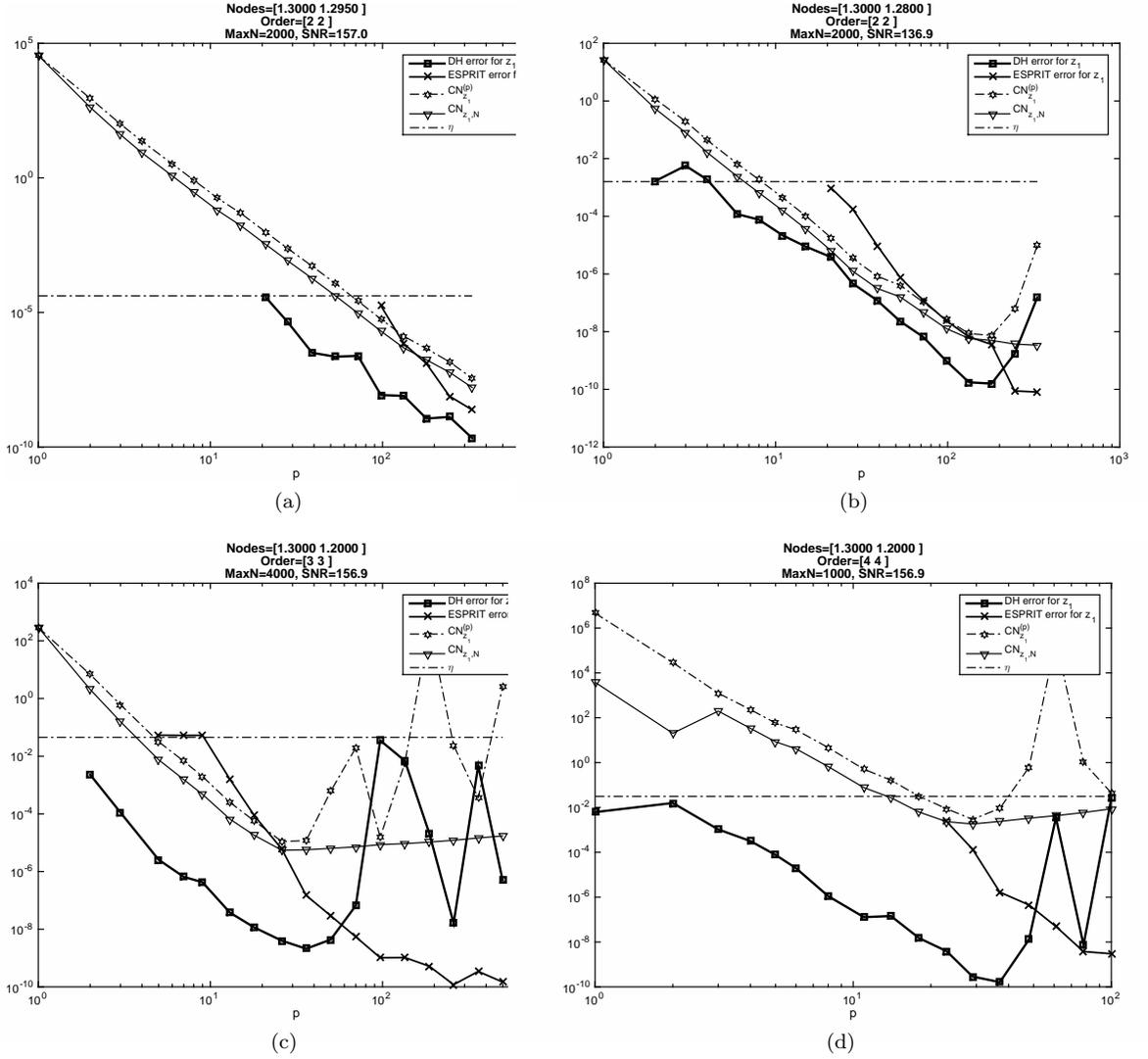

\subfloat[]{\includegraphics[clip,width=0.35\paperwidth]{run1}

}\subfloat[]{\includegraphics[width=0.35\paperwidth]{run2}

\label{subfig:exp2}}

\subfloat[]{\includegraphics[width=0.35\paperwidth]{run4}}\subfloat[]{\includegraphics[width=0.35\paperwidth]{run5}}

\caption{DH vs. ESPRIT. Also plotted are condition numbers, both full and decimated,
as well as the threshold $\eta$ used in the experiments.}
\label{fig:dh-esprit}
\end{figure}

\begin{table}
\begin{centering}
\subfloat[Exhaustive search. Pruning step only.]{\begin{centering}
\begin{tabular}{|c|c|c|}
\hline 
$\df$ & time (sec) & rec.error\tabularnewline
\hline 
\hline 
1 & 1 & 0.005\tabularnewline
\hline 
3 & 8 & 0.001\tabularnewline
\hline 
5 & 25 & 0.0002\tabularnewline
\hline 
10 & 99 & 0.00009\tabularnewline
\hline 
\end{tabular}
\par\end{centering}
\label{tbl:exhaustive-time}} \subfloat[Filtering. Overall timings]{\begin{centering}
\begin{tabular}{|c|c|c|}
\hline 
$\df$ & time (sec) & rec.error\tabularnewline
\hline 
\hline 
3 & 0.9 & 0.0002\tabularnewline
\hline 
4 & 1.1 & 0.0001\tabularnewline
\hline 
6 & 1.1 & 0.0004\tabularnewline
\hline 
8 & 1.0 & 0.00008\tabularnewline
\hline 
\end{tabular}
\par\end{centering}
\label{tbl:pruning-timing}}
\par\end{centering}
\caption{Running times for exhaustive search and filtering. The experimental
setup is the same as in Figure \ref{subfig:exp2}. Each table describes
a separate experiment, therefore the reconstruction errors are slightly
different.}

\label{tbl:exh-prun-timings}
\end{table}

\begin{table}
\begin{centering}
\begin{tabular}{|c|c|c|c|c|}
\hline 
$p$ & ESPRIT & DH\_Create & DH\_Run & DH\_Select\tabularnewline
\hline 
\hline 
120 & 0.13 & 0.84 & 0.11 & 0.001\tabularnewline
\hline 
169 & 0.21 & 0.75 & 0.09 & 0.001 \tabularnewline
\hline 
238 & 0.21 & 0.74 & 0.10 & 0.002\tabularnewline
\hline 
335 & 0.34 & 0.89 & 0.10 & 0.003\tabularnewline
\hline 
\end{tabular}
\par\end{centering}
\caption{DH vs ESPRIT timings (sec). The columns for DH correspond to the system
construction, PHC run and the pruning steps.}

\label{tbl:dh-esprit-timing}
\end{table}

Some additional remarks:
\begin{enumerate}
\item The number of solutions of the system \eqref{eq:the-hankel-system}
was equal to $\np!d^{\np}$ ($\np$=number of nodes, $d=d_{j}$=degree). 
\item Running times are better for DH when $\df$ is large, because the
selection step of \prettyref{alg:decimated-homotopy} is $O\left(\nm\right)$,
while the cost of full SVD for ESPRIT is $O\left(\nm^{2}\nparams\right)$.
See in particular \prettyref{tbl:dh-esprit-timing}. We did not observe
any unusual memory consumption during the execution of the algorithms.
\item Condition number estimates are somewhat pessimistic, nevertheless
indicating the order of error decay in a relatively accurate fashion.
The periodic pattern is well-predicted by the theory, see \cite{batenkov_numerical_2014}.
\end{enumerate}

\section{\label{sec:future-work}Discussion}

In this paper we presented a novel algorithm, Decimated Homotopy,
for numerical solution of systems of Prony type \eqref{eq:prony}
with nodes on the unit circle, $\left|\nd_{j}\right|=1$ which are
closely spaced. Analysis shows that the produced solutions have near-optimal
numerical accuracy. Numerical experiments demonstrate that the pruning
heuristics are efficient in practice, and the algorithm provides reconstruction
accuracy several orders of magnitude better than the standard ESPRIT
algorithm. The pruning\eqref{eq:heuristic-prefilter} will be justified
in the case that the system ${\cal H}_{\df^{*}}$ has no spurious
solutions on the torus $\TT^{\np}$. This seems to hold in practice,
and therefore a theoretical analysis of this question would be desirable.
On the other hand, initial approximations of order $\eta\approx\nm^{-1}$
can presumably be obtained by existing methods with lower-order multiplicity
(similar to what was done in \cite{batenkov_algebraic_2012}).

Another important question of interest is robust detection of these
near-singular situations, i.e. correct identification of the collision
pattern $D$. While the integer $\dg$ can be estimated via numerical
rank computation of the Hankel matrix $H_{d}$ \eqref{eq:hankel-data-matrix}
(see e.g. \cite{cadzow_total_1994} and also a randomized approach
\cite{kaltofen_fast_2012}), the determination of the individual components
of $D$ is a more delicate task, which requires an accurate estimation
of the distance from the data point to the nearest ``pejorative''
manifold of larger multiplicity, and comparing it with the a-priori
bound $\acc$ on the error. We hope that the present (and future)
symbolic-numeric techniques such as \cite{dayton_multiple_2011,pope_nearest_2009},
combined with description of singularities of the Prony mapping $\fwm$
\cite{byPronySing12}, will eventually provide a satisfactory answer
to this question.

As we discuss in \cite{batenkov_numerical_2014}, decimation is related
to other similar ideas in numerical analysis \cite{sidi_practical_2003}
and signal processing \cite{kia_high-resolution_2007,kim_high-resolution_2013,maravic_sampling_2005}.
In symbolic-numeric literature connected with sparse numerical polynomial
interpolation (i.e. in the noisy setting), the possible ill-conditioning
of the Hankel matrices $H_{d}$ can be overcome either by random sampling
of the nodes $\left\{ \nd_{j}\right\} $ \cite{giesbrecht_symbolicnumeric_2009,kaltofen_early_2003,kaltofen_fast_2012}
or by the recently introduced affine sub-sequence approach \cite{kaltofen_sparse_2014}
for outlier detection (see also \cite{comer_sparse_2012}), which
is in many ways similar to decimation.

It would be interesting to establish more precise connections of our
method to these works.

\section*{References}

\bibliographystyle{elsarticle-harv}
\phantomsection\addcontentsline{toc}{section}{\refname}\bibliography{tcs14}

\end{document}